\theoremstyle{plain}
\newtheorem{thm}{Theorem}
  \theoremstyle{definition}
  \newtheorem*{thm*}{Theorem}
  \theoremstyle{remark}
  \theoremstyle{plain}
  \newtheorem{prop}[thm]{Proposition}
  \theoremstyle{plain}
  \newtheorem{lem}[thm]{Lemma}
  \theoremstyle{plain}
 \theoremstyle{definition}
  \theoremstyle{remark}
  \newtheorem*{rem*}{Remark}
  \theoremstyle{definition}
\newtheorem*{question*}{\it{QUESTION}}
\theoremstyle{plain}
\newtheorem*{oq*}{Open Question}
\newcommand{\N}{\mathbb{N}}
\newcommand{\R}{{\mathbb{R}}}
\newcommand{\C}{{\mathbb{C}}}
\newcommand{\Z}{{\mathbb{Z}}}
\newcommand{\dd}{{\rm d}}
\newcommand{\ii}{{\rm i}}
\newcommand{\spn}{\mathop\mathrm{span}\nolimits} % \span...makes nonfunctional /align
\newcommand{\Ran}{\mathop\mathrm{Ran}\nolimits}
\renewcommand{\Re}{\mathop\mathrm{Re}\nolimits}
\renewcommand{\Im}{\mathop\mathrm{Im}\nolimits}
\newcommand{\dist}{\mathop\mathrm{dist}\nolimits}
\begin{document}

\title[]{On Lieb--Thirring inequalities for one-dimensional non-self-adjoint Jacobi and Schr{\" o}dinger operators}

\author{Sabine B{\"o}gli}
\address[Sabine B{\"o}gli]{
   Department of Mathematical Sciences, Durham University, Lower Mountjoy, Stockton Road, Durham DH1 3LE, UK
    }
\email{sabine.boegli@durham.ac.uk}

\author{Franti\v sek \v Stampach}
\address[Franti{\v s}ek {\v S}tampach]{
	Department of Mathematics, Faculty of Nuclear Sciences and Physical Engineering, Czech Technical University in Prague, Trojanova~13, 12000 Praha~2, Czech Republic
	}	
\email{stampfra@fjfi.cvut.cz}

\subjclass[2010]{47B36, 34L40, 47A10, 47A75}

\keywords{Lieb--Thirring inequality, Jacobi matrix, Schr{\" o}dinger operator}

\date{\today}

\begin{abstract}
We study to what extent Lieb--Thirring inequalities are extendable from self-adjoint to general (possibly non-self-adjoint) Jacobi and Schr{\" o}dinger operators. Namely, we prove the conjecture of Hansmann and Katriel from~\cite{han-kat_caot11} and answer another open question raised therein. The results are obtained by means of asymptotic analysis of eigenvalues of discrete Schr{\" o}dinger operators with rectangular barrier  potential and complex coupling. Applying the ideas in the continuous setting, we also solve a similar open problem for one-dimensional Schr{\" o}dinger operators with complex-valued potentials published by Demuth, Hansmann, and Katriel in~\cite{dem-han-kat_ieop13}.
\end{abstract}

\maketitle

\section{Introduction}

Lieb--Thirring inequalities have attracted the attention of the mathematical community since their appearance in the work of Lieb and Thirring~\cite{lie-thi_prl75,lie-thi_91} on the stability of matter, where they were carried out in the context of self-adjoint Schr{\" o}dinger operators. Later developments gave rise to a huge number of works devoted primarily to Lieb--Thirring inequalities for Schr{\" o}dinger operators but also other operator families. For some references concerning Lieb--Thirring inequalities for Schr{\" o}dinger and Jacobi operators, we mention at least \cite{dem-han-kat_jfa09,dem-han-kat_ieop13,fra_tams18,fra-etal_06,fra-sim-wei_cmp08,hun-lie-tho_atmp98, hun-sim_jat02, wei_cmp96}.

Within the last decade, a great interest developed for generalizations of the classical Lieb--Thirring inequalities, which were originally derived for self-adjoint operators only, to non-self-adjoint operator families. Still several naturally formulated questions have remained open. Here we particularly refer to the open problems concerning non-self-adjoint Jacobi and Schr{\" o}dinger operators that were published in~\cite{han-kat_caot11} and~\cite{dem-han-kat_ieop13} and which are discussed in this article in more detail. As far as the existing results on Lieb--Thirring inequalities for non-self-adjoint Jacobi operators are concerned, the reader may consult the papers \cite{bor-gol-kup_blms09, chri-zin_lmp17, gol-kup_lmp07,han_lmp11,han-kat_caot11}.

\subsection{State of the art - Jacobi operators}

Let $J$ be the Jacobi operator acting on~$\ell^{2}(\Z)$ defined by its action on vectors of the standard basis $\{e_{n}\}_{n\in\Z}$ of $\ell^{2}(\Z)$ by
\[
 Je_{n}=%a_{n-1}e_{n-1}+b_{n}e_{n}+c_{n}e_{n+1},
a_ne_{n+1}+b_{n}e_{n}+c_{n-1}e_{n-1}, \quad n\in\Z,
\]
where $\{a_{n}\}_{n\in\Z}$, $\{b_{n}\}_{n\in\Z}$, and $\{c_{n}\}_{n\in\Z}$ are given bounded complex sequences. Then $J$ is a bounded operator and can be identified with the doubly-infinite complex Jacobi matrix
\[
J=\begin{pmatrix}
	\ddots & \ddots & \ddots & \\
	 & a_{-1} & b_{0} & c_{0} & \\
	 & & a_{0} & b_{1} & c_{1} & \\
 	 & & & a_{1} & b_{2} & c_{2} & \\
     & & & & \ddots & \ddots & \ddots
  \end{pmatrix}.
\]
%
%This article is motivated by recent developments on Lieb--Thirring-type inequalities for Jacobi matrices. Namely, their extensions from the well-known self-adjoint case, when $a_{n}=c_{n}>0$ and $b_{n}\in\R$, to the general complex case is of our primary interest. A~formulation and proof of such an extension is the subject of recent paper~\cite{han-kat_caot11}, where certain natural questions remained open, however. This contribution provides answers to these questions. 

We follow~\cite{han-kat_caot11} and use the notation
\[
 d_{n}:=\max\{|a_{n-1}-1|,|a_{n}-1|,|b_{n}|,|c_{n-1}-1|,|c_{n}-1|\}, \quad n\in\Z.
\]
If $\lim_{n\to\pm\infty}d_{n}=0$, $J$ is a compact perturbation of the free Jacobi operator $J_{0}$ defined by 
\[
 J_{0}e_{n}=e_{n-1}+e_{n+1}, \quad n\in\Z.
\]
In this case, it is well known that the essential spectrum is $\sigma_{ess}(J)=\sigma_{ess}(J_0)=[-2,2]$ and
\[
\sigma(J)=[-2,2]\cup\sigma_{d}(J).
\]
The discrete spectrum $\sigma_{d}(J)\subset\C\setminus[-2,2]$  is an at most countable set of eigenvalues of~$J$ with all possible accumulation points contained in $[-2,2]$.

Lieb--Thirring inequalities for self-adjoint Jacobi operators, i.e, for the case when $a_{n}=c_{n}>0$ and $b_{n}\in\R$ are due to Hundertmark and Simon~\cite{hun-sim_jat02} and can be formulated as follows: If $d\in\ell^{p}(\Z)$ for some $p\geq1$, then
\begin{equation}
\sum_{\lambda\in\sigma_{d}(J)\cap(-\infty,-2)}|\lambda+2|^{p-1/2}+\sum_{\lambda\in\sigma_{d}(J)\cap(2,\infty)}|\lambda-2|^{p-1/2}\leq C_{p}\|d\|_{\ell^{p}}^{p},
\label{eq:hun-sim_ineq1}
\end{equation}
where $C_{p}$ is an explicit constant that depends on $p$ but is independent of~$J$. Such constants are meant generically and can vary while in the following the notation remains the same.

When trying to find a convenient form for an extension of inequality~\eqref{eq:hun-sim_ineq1} to non-self-adjoint Jacobi operators, it seems natural to reformulate~\eqref{eq:hun-sim_ineq1} in terms of the distance between the eigenvalue $\lambda$ and the essential spectrum $[-2,2]$ as
\begin{equation}
 \sum_{\lambda\in\sigma_{d}(J)}\left(\dist(\lambda,[-2,2])\right)^{p-1/2}\leq C_{p}\|d\|_{\ell^{p}}^{p}.
\label{eq:hun-sim_ineq2}
\end{equation}
In~\cite{han-kat_caot11}, Hansmann and Katriel conjectured that the inequality~\eqref{eq:hun-sim_ineq2} is no longer true when the assumption on self-adjointness of $J$ is dropped. Our first main result (Theorem~\ref{thm:first})  proves 
the conjecture. In fact, we show that~\eqref{eq:hun-sim_ineq2} does not hold even when restricted to non-self-adjoint discrete Schr{\" o}dinger operators, i.e, Jacobi operators $J$ with $a_{n}=c_{n}=1$, for all $n\in\Z$.

Another form of the inequality~\eqref{eq:hun-sim_ineq1} that is admissible for an extension to the non-self-adjoint case can be based on the observation that
\[
\frac{\dist\left(\lambda,[-2,2]\right)^{p}}{|\lambda^{2}-4|^{1/2}}\leq\frac{1}{2}\begin{cases}
														  |\lambda-2|^{p-1/2},& \quad \mbox{ if } \lambda\in (2,\infty),\\
														  |\lambda+2|^{p-1/2},& \quad \mbox{ if } \lambda\in (-\infty,-2).
														  \end{cases}
\]
Then~\eqref{eq:hun-sim_ineq1} implies, for the self-adjoint case,
\begin{equation}
\sum_{\lambda\in\sigma_{d}(J)}\frac{\left(\dist(\lambda,[-2,2])\right)^{p}}{|\lambda^{2}-4|^{1/2}}\leq C_{p}\|d\|_{\ell^{p}}^{p}.
\label{eq:hans-kat_tau0}
\end{equation}
Note that its generalization to the non-real case would be a weaker version than~\eqref{eq:hun-sim_ineq2} since $\dist(\lambda,[-2,2])\leq \frac 1 2 |\lambda^2-4|$ for all $\lambda\in\C$.
The estimate~\eqref{eq:hans-kat_tau0} is very close to the even weaker version that was proven for general, possibly non-self-adjoint, Jacobi operators in~\cite[Thm.~1]{han-kat_caot11}.

\begin{thm}[Hansmann--Katriel] \label{thm:hans-kat}
 Suppose $\tau\in(0,1)$ and $d\in\ell^{p}(\Z)$ with $p\geq1$. Then
 \begin{equation}
 \sum_{\lambda\in\sigma_{d}(J)}\frac{\left(\dist(\lambda,[-2,2])\right)^{p+\tau}}{|\lambda^{2}-4|^{1/2}}\leq C_{p,\tau}\| d\|_{\ell^{p}}^{p}, \quad \mbox{ if } p>1,
 \label{eq:han-kat_ineq_p}
 \end{equation}
 and
 \begin{equation}
 \sum_{\lambda\in\sigma_{d}(J)}\frac{\left(\dist(\lambda,[-2,2])\right)^{1+\tau}}{|\lambda^{2}-4|^{1/2+\tau/4}}\leq   C_{\tau}\| d\|_{\ell^{1}}, \quad \mbox{ if } p=1.
 \label{eq:han-kat_ineq_1}
 \end{equation}
\end{thm}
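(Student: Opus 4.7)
The natural route is via perturbation determinants on the unit disc, combined with a complex-analytic result of Borichev--Golinskii--Kupin type on the distribution of zeros of holomorphic functions with controlled singular growth at the boundary. My plan is to reduce the spectral inequality on $\C\setminus[-2,2]$ to a statement about zeros of a single holomorphic function on $\D$, pulled back through the Joukowski map $\varphi(z)=z+z^{-1}$, which sends $\D$ biholomorphically onto $\C\setminus[-2,2]$ and carries $\partial\D$ onto the essential spectrum $[-2,2]$ with ramification at $z=\pm 1$ corresponding to the endpoints $\lambda=\pm 2$.

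First I would introduce a regularised perturbation determinant $h(z):=\det_{\lceil p\rceil}\bigl(I+(J-J_0)(J_0-\varphi(z))^{-1}\bigr)$, which is holomorphic on $\D$ and whose zeros (with multiplicity) are precisely the preimages under $\varphi$ of the discrete eigenvalues $\sigma_d(J)$. The standing assumption $d\in\ell^p(\Z)$ guarantees that $J-J_0$ belongs to the Schatten class $\mathcal{S}^p$, so the regularised determinant is well-defined. The key analytic input is a quantitative upper bound of the form
\[
|h(z)|\le\exp\!\left(\frac{K\,\|d\|_{\ell^p}^{p}}{(1-|z|)^{p-1}\,|z-1|^{a}\,|z+1|^{a}}\right),\qquad z\in\D,
\]
for an appropriate exponent $a$ (which turns out to be $1/2$ in the regime $p>1$ and to be adjusted when $p=1$). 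This is derived from the standard Schatten bounds on resolvent determinants together with explicit control of $\|(J_0-\varphi(z))^{-1}\|_{\mathcal{S}^p}$, exploiting the spectral representation of $J_0$ through the Fourier transform; the singularities at $z=\pm 1$ reflect the square-root branching of the resolvent at the band edges $\pm 2$.

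With this majorant in hand, I would invoke the Borichev--Golinskii--Kupin theorem, which states that for $h\in\mathrm{Hol}(\D)$ with $h(0)=1$ satisfying an exponential bound with a boundary-singular weight, the zero set $\{z_k\}$ of $h$ obeys
\[
\sum_k (1-|z_k|)^{\alpha+1+\varepsilon}\prod_\pm |z_k\mp 1|^{(\beta-1+\varepsilon)_+}\le C\,\|d\|_{\ell^p}^{p}
\]
for every $\varepsilon>0$. The final step is purely geometric: under $\lambda=\varphi(z)$ one has $1-|z|\asymp\dist(\lambda,[-2,2])/|\lambda^2-4|^{1/2}$ away from the endpoints, while $|z\mp 1|\asymp|\lambda\mp 2|^{1/2}$ near them, so substituting into the Blaschke-type sum and identifying $\varepsilon$ with $\tau$ yields precisely \eqref{eq:han-kat_ineq_p} for $p>1$ and \eqref{eq:han-kat_ineq_1} for $p=1$.

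The main obstacle I expect is obtaining sharp enough control of the determinant near the edges $z=\pm 1$: a naive Schatten estimate inflates the exponent $a$ and produces a weaker inequality than~\eqref{eq:han-kat_ineq_p}--\eqref{eq:han-kat_ineq_1}. This has to be addressed by a careful analysis of the free resolvent kernel $(J_0-\varphi(z))^{-1}_{m,n}=-z^{|m-n|+1}/(1-z^2)$, where the denominator $1-z^2$ accounts quantitatively for the band-edge singularity and must be matched against the $\ell^p$ norm of $d$ using Hausdorff--Young or Hölder inequalities in the Schatten scale. The borderline case $p=1$ is especially delicate and is the reason for the weaker exponent $1/2+\tau/4$ in~\eqref{eq:han-kat_ineq_1}, arising from the loss of one degree of regularity in the determinant bound.
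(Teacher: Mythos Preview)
Your outline is essentially the correct strategy, and it is precisely the one carried out in the original reference~\cite{han-kat_caot11}: pull the spectral problem back to the unit disc via the Joukowski map, build a regularised perturbation determinant whose zeros encode $\sigma_d(J)$, bound its growth by Schatten-norm estimates on $(J-J_0)(J_0-\varphi(z))^{-1}$, and feed the resulting majorant into the Borichev--Golinskii--Kupin theorem~\cite{bor-gol-kup_blms09}. The positive parameter $\tau$ enters exactly where you place it, namely as the $\varepsilon$ in the Blaschke-type condition.

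However, note that the present paper does \emph{not} prove Theorem~\ref{thm:hans-kat}. It is quoted verbatim from~\cite[Thm.~1]{han-kat_caot11} as background, and the paper's own contribution goes in the opposite direction: Theorems~\ref{thm:first} and~\ref{thm:second} show, by explicit construction, that one \emph{cannot} take $\tau=0$ in~\eqref{eq:han-kat_ineq_p}--\eqref{eq:han-kat_ineq_1}. So there is no proof in this paper to compare your proposal against; you have reconstructed the argument of the cited source rather than anything the authors do here. If your aim was to supply a proof for the stated theorem, your sketch is on the right track and faithful to the original, though the precise edge exponents (your ``$a$'') and the handling of the $p=1$ case require the detailed computations in~\cite{han-kat_caot11} to pin down.
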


The inequalities~\eqref{eq:han-kat_ineq_p} and~\eqref{eq:han-kat_ineq_1} are slightly weaker than~\eqref{eq:hans-kat_tau0} due to the presence of the positive parameter $\tau$. The proof presented in~\cite{han-kat_caot11} elaborates on a previous result due to Borichev et al.~\cite{bor-gol-kup_blms09}, where the parameter $\tau$ enters and its positivity is required by the chosen approach. However, it remained an open question whether~\eqref{eq:han-kat_ineq_p} and~\eqref{eq:han-kat_ineq_1} could hold for $\tau=0$, which would imply~\eqref{eq:hans-kat_tau0}.
Our second main result (Theorem~\ref{thm:second})  answers this question to the negative, i.e., inequality~\eqref{eq:hans-kat_tau0} does not extend to non-self-adjoint Jacobi operators. In fact, it is not even true for non-self-adjoint discrete Schr{\" o}dinger operators. This means that the positivity of $\tau$ is not just a requirement dictated by the chosen approach in~\cite{han-kat_caot11} but it is essential. Consequently, Theorem~\ref{thm:hans-kat} is sharp in this sense. Recently, Theorem~\ref{thm:hans-kat} was generalized to non-self-adjoint perturbations of finite gap Jacobi matrices by Christiansen and Zinchenko in~\cite{chri-zin_lmp17}.

Although the answers to both questions raised in~\cite{han-kat_caot11} are negative,  they help to better understand the boundaries between the self-adjoint and general setting for Lieb--Thirring-type inequalities. The strategy to obtain the answers is based on a convenient choice of a concrete family of Jacobi operators from the considered class. We study the discrete Schr{\" o}dinger operator with rectangular barrier  potential and complex coupling. The properties of this particular operator can be of independent interest. For our goals, it is essential that the eigenvalue problem can be transformed into a study of solutions of relatively simple algebraic equations. These results are worked out in Section~\ref{sec:jacobi}.

\subsection{State of the art - Schr{\" o}dinger operators}

A similar open problem, this time for Schr{\" o}dinger operators with complex-valued potentials, was published in~\cite{dem-han-kat_ieop13}. Recall that the classical Lieb--Thirring inequality for a Schr{\" o}dinger operator $H=-\Delta+V$ in $L^{2}(\R^{d})$ reads
\begin{equation}
\sum_{\lambda\in\sigma_{d}(H)}|\lambda|^{p-d/2}\leq C_{p,d}\|V\|_{L^{p}}^{p},
\label{eq:lieb-thirring_ineq_sa}
\end{equation}
provided that $V$ is a real-valued function from $L^{p}(\R^{d})$, where the range for $p$ depends on the dimension~$d$ as follows:
\begin{align}
p\geq 1,& \quad \mbox{ if } d=1,\nonumber\\
p> 1,& \quad \mbox{ if } d=2,\label{eq:p_d_rel}\\
p\geq\tfrac{d}{2},& \quad \mbox{ if } d\geq 3.\nonumber
\end{align}

Inequality~\eqref{eq:lieb-thirring_ineq_sa} cannot be true for complex-valued $V\in L^{p}(\R^{d})$ with $p>d$ since, in this case, $\sigma_{d}(H)$ can have accumulation points anywhere in $\sigma_{ess}(H)=[0,\infty)$, see~\cite{bog_cmp17}. However, if $|\lambda|^{p}$ is replaced by $(\dist(\lambda,[0,\infty)))^{p}$ in~\eqref{eq:lieb-thirring_ineq_sa}, we arrive at the inequality
\begin{equation}
\sum_{\lambda\in\sigma_{d}(H)}\frac{\left(\dist(\lambda,[0,\infty))\right)^{p}}{|\lambda|^{d/2}}\leq C_{p,d}\|V\|_{L^{p}}^{p},
\label{eq:lieb-thirring_ineq_dist_form}
\end{equation}
which seems to be a reasonable candidate for the Lieb--Thirring inequality extended to complex-valued potentials. This brings us to the following open problem formulated in~\cite{dem-han-kat_ieop13}.

\begin{oq*}[Demuth--Hansmann--Katriel]
 Assuming~\eqref{eq:p_d_rel}, is inequality~\eqref{eq:lieb-thirring_ineq_dist_form} true for all $V\in L^{p}(\R^{d})$? Prove it or construct a counter-example.
\end{oq*}

In Theorem~\ref{thm:third} we partly answer the question by showing it is again negative for $d=1$, see the construction of a concrete counter-example in Section~\ref{sec:schrodinger}. The approach is similar as the one used in the discrete case of Jacobi matrices. Note that, for $d=1$, the inequality~\eqref{eq:lieb-thirring_ineq_dist_form} can be viewed as a~continuous analogue of the inequality~\eqref{eq:hans-kat_tau0}.  The problem remains open, however, in higher dimensions $d\geq2$.

\section{Jacobi operators}\label{sec:jacobi}

For the sake of concreteness, we formulate two statements whose proofs follow from the analysis of properties of the discrete Schr{\" o}dinger operator with rectangular barrier  potential and complex coupling studied below. To distinguish, in notation, the restriction of the class of general Jacobi operators $J$ with $d\in\ell^{p}(\Z)$ to the set of discrete Schr{\" o}dinger operators with complex potential $b\in\ell^{p}(\Z)$, we denote by $T=T(b)$ the operator determined by the equations
\[
Te_{n}:=e_{n-1}+b_{n}e_{n}+e_{n+1}, \quad n\in\Z.
\]

\begin{thm}\label{thm:first}
For any $p\geq0$ and $\omega<p$, one has
\[
\sup_{0\neq b\in\ell^{p}(\Z)}\frac{1}{\|b\|_{\ell^{p}}^{p}}\sum_{\lambda\in\sigma_{d}(T(b))}\left(\dist(\lambda,[-2,2])\right)^{\omega}=\infty.
\]
\end{thm}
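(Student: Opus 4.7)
The plan is to construct an explicit family of counterexamples using non-self-adjoint discrete Schr\"odinger operators with rectangular barrier potential and purely imaginary coupling. For parameters $\gamma>0$ small and $N\in\N$ large, take $b_n=\ii\gamma$ for $1\leq n\leq N$ and $b_n=0$ otherwise, so $\|b\|_{\ell^p}^p=N\gamma^p$. I aim to show that $T(b)$ has at least $c_1N$ eigenvalues $\lambda$ with $\dist(\lambda,[-2,2])\geq c_2\gamma$, with absolute constants $c_1,c_2>0$; then
\[
\sum_{\lambda\in\sigma_d(T(b))}(\dist(\lambda,[-2,2]))^\omega\geq c_1c_2^\omega N\gamma^\omega,
\]
so the supremum in the statement is bounded below by $c_1c_2^\omega\gamma^{\omega-p}$, which diverges as $\gamma\to 0^+$ (with $N=N(\gamma)$ large enough) whenever $\omega<p$.

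To locate the eigenvalues I would set up $T(b)\psi=\lambda\psi$ in the three regions: $\psi_n=z^n$ on $n\leq 0$, $\psi_n=Aw^n+Bw^{-n}$ on $1\leq n\leq N$, and $\psi_n=Dz^{-n}$ on $n\geq N+1$, where $\lambda=z+z^{-1}$, $\lambda-\ii\gamma=w+w^{-1}$, and $|z|>1$ is forced by $\ell^2$-decay at $\pm\infty$. Imposing the Jacobi recurrence at the four boundary sites $n=0,1,N,N+1$ and eliminating $A,B,D$ reduces the problem to the compact algebraic equation $(zw-1)w^N=\pm(w-z)$. Introducing $z=e^{\ii k'}$, $w=e^{\ii k}$ and the sum-and-difference variables $p=(k+k')/2$, $q=(k-k')/2$, this together with the coupling constraint becomes the clean pair
\[
e^{\ii N(p+q)}\sin p=\pm\sin q,\qquad \sin p\,\sin q=\tfrac{\ii\gamma}{4}.
\]

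Next I would perform the asymptotic analysis of this system in the regime $N\to\infty$, $\gamma\to 0^+$ with $N\gamma/\log(1/\gamma)\to\infty$. The coupling equation gives $q\approx \ii\gamma/(4\sin p)$ as long as $|\sin p|$ is bounded away from $0$; substituting into the first equation and separating absolute value and argument produces, for each integer $m$ with $(\pm\pi/2+2\pi m)/N\in[\delta,\pi-\delta]$,
\[
\Re(p+q)\approx\frac{\pm\pi/2+2\pi m}{N},\qquad \Im(p+q)\approx\frac{1}{N}\log\frac{4\sin^2 p}{\gamma}.
\]
There are of order $N$ admissible $m$. Using the identity $\cos(p-q)=\cos p\,\cos q+\sin p\,\sin q$ one obtains $\lambda_m=2\cos p_m\cos q_m+\ii\gamma/2$; accounting for $\Im p_m\approx -\gamma/(4\sin p_m)$ in the first summand gives $\Im\lambda_m\approx\gamma$, so $\dist(\lambda_m,[-2,2])\geq c_2\gamma$ with $c_2=c_2(\delta)>0$. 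To upgrade this heuristic to a rigorous existence proof I would fix a small rectangle around each approximate root in the complex $p$-plane and apply Rouch\'e's theorem to the holomorphic function $F_m(p):=e^{\ii N(p+q(p))}\sin p\mp\sin q(p)$, where $q(p)$ is the local branch of the coupling equation.

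The main technical obstacle is controlling the approximation errors uniformly in $N$ and $\gamma$: one must verify that at least a positive fraction of the $\sim N$ candidate roots give genuine eigenvalues with distance $\geq c_2\gamma$ from $[-2,2]$, with absolute constants, while excluding spurious solutions of the eigenvalue equation that could produce $\lambda\in[-2,2]$ or escape the $\ell^2$-regime $|z|>1$. Once this uniform count is secured, the ratio $c_1c_2^\omega\gamma_n^{\omega-p}$ diverges along any sequence $\gamma_n\to 0^+$ with $N_n\geq\gamma_n^{-1}\log(1/\gamma_n)$, completing the proof.
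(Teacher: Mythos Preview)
Your proposal is correct and follows essentially the same strategy as the paper: both use the rectangular-barrier discrete Schr\"odinger operator with purely imaginary coupling, locate $\sim N$ eigenvalues at distance $\sim\gamma$ from $[-2,2]$, and observe that the resulting ratio behaves like $\gamma^{\omega-p}\to\infty$. The paper fixes the specific scaling $\gamma=n^{-2/3}$, $N=n$ from the outset, while you keep $N$ and $\gamma$ as two free parameters in the regime $N\gamma/\log(1/\gamma)\to\infty$; this is a cosmetic difference. Your derivation of the characteristic equation $(zw-1)w^N=\pm(w-z)$ via direct matching of exponential solutions is more elementary than the paper's route through the Birman--Schwinger principle and the Kac--Murdock--Szeg\H{o} determinant, but the two equations are equivalent (the paper's variables are $k=z^{-1}$ and its $z$ is your $w^{\pm1}$). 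The technical obstacle you flag---verifying that the approximate roots are genuine and satisfy $|z|>1$---is exactly what the paper carries out in detail: it expands $k_j=e^{-\ii\phi_j}(1-n^{-2/3}/(2\sin\phi_j)+O(\log n/n))$ to confirm $|k_j|<1$, which is the delicate step. Your proposed Rouch\'e argument would work in place of the paper's direct asymptotic substitution.
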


In particular, for $\omega=p-1/2$, Theorem~\ref{thm:first} confirms the conjecture of Hansmann and Katriel. On the other hand, the inequality
\[
 \sum_{\lambda\in\sigma_{d}(J)}\left(\dist(\lambda,[-2,2])\right)^{p}\leq C_{p}\|d\|_{\ell^{p}}^{p}
\]
is known to hold for any Jacobi operator $J$, see~\cite[Thm.~4.2]{han_lmp11}. 
Hence, for $\omega\geq p\geq1$, the claim of Theorem~\ref{thm:first} is no longer true.
This shows the difference between the self-adjoint and general case for this kind of Lieb--Thirring inequalities for the exponent $\omega$ in the interval $[p-1/2,p)$.

The next statement concerns the possibility of extension of inequality~\eqref{eq:hans-kat_tau0} to the non-self-adjoint setting.

\begin{thm}\label{thm:second}
For any $p\geq1$ and $\sigma\geq1/2$, one has
\[
\sup_{0\neq b\in\ell^{p}(\Z)}\frac{1}{\|b\|_{\ell^{p}}^{p}}\sum_{\lambda\in\sigma_{d}(T(b))}\frac{\left(\dist(\lambda,[-2,2])\right)^{p}}{|\lambda^{2}-4|^{\sigma}}=\infty.
\]
\end{thm}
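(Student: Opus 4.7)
The plan is to exhibit a family of rectangular-barrier discrete Schr{\"o}dinger operators along which the weighted eigenvalue sum outgrows the $\ell^p$ norm of the potential. For $N \in \N$ and $\alpha \in \C\setminus\{0\}$ set
\[
b^{(N,\alpha)}_n := \alpha\,\mathbf{1}_{\{1,\dots,N\}}(n),\qquad n\in\Z,
\]
so that $\|b^{(N,\alpha)}\|_{\ell^p}^p = N|\alpha|^p$. The divergence will be achieved by choosing $\alpha_N = i\beta_N \in i\R_+$ with $\beta_N \to 0$ at a prescribed rate and letting $N\to\infty$ correspondingly.

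The first step is to derive an algebraic form of the eigenvalue equation. Using the Joukowski parametrizations $\lambda = \zeta + \zeta^{-1}$ with $|\zeta|<1$ and $\lambda - \alpha = \eta + \eta^{-1}$ with $|\eta| \le 1$, an $\ell^2$-eigenvector of $T(b^{(N,\alpha)})$ must be of the form $A\zeta^{-n}$ on $n\le 0$, $C\eta^n+D\eta^{-n}$ on $\{1,\dots,N\}$, and $B\zeta^n$ on $n\ge N+1$. Matching the three-term recurrence at $n=0$ and $n=N+1$ yields four linear equations in $(A,B,C,D)$; eliminating the amplitudes leaves the compatibility condition
\[
(\zeta - \eta)^2\eta^{2N} \;=\; (1 - \zeta\eta)^2, \qquad\text{i.e.,}\qquad (\zeta - \eta)\eta^N \;=\; \pm(1 - \zeta\eta),
\]
subject to $\zeta + \zeta^{-1} - \eta - \eta^{-1} = \alpha$. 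This is the promised ``relatively simple algebraic equation''.

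The second step is the asymptotic analysis of the roots in $\{|\zeta|<1\}$. For $|\alpha_N|$ small, expanding $\eta$ as a perturbation of $\zeta$ reduces the system to $\alpha_N\,\zeta^{N+2} \asymp \pm(1-\zeta^2)^2$ near the spectral edge $\lambda = 2$. A polar decomposition $\zeta = (1-r)e^{i\theta}$ with $r,\theta > 0$ then produces a modulus equation $|\alpha_N|\,e^{-(N+2)r} \asymp 4\sin^2\theta$ and a phase equation that quantizes $\theta$ at spacing $\asymp 1/N$. The resulting family of eigenvalues $\{\lambda_{N,k}\}$ satisfies
\[
\dist(\lambda_{N,k},[-2,2]) \asymp \beta_N, \qquad |\lambda_{N,k}^2 - 4| \asymp \sqrt{(k/N)^4 + \beta_N^2},
\]
for $k$ in a suitable range, with a symmetric family accumulating near $\lambda = -2$. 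The choice $\alpha_N = i\beta_N$ is made so that the resulting $\zeta$ lie strictly inside the Joukowski disk.

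Summing these contributions and approximating by a Riemann integral yields
\[
\sum_{k}\frac{(\dist(\lambda_{N,k},[-2,2]))^p}{|\lambda_{N,k}^2 - 4|^\sigma} \asymp \beta_N^p \cdot N \cdot \beta_N^{\frac{1}{2} - \sigma}\int_0^{c\beta_N^{-1/2}}(u^4 + 1)^{-\sigma/2}\,du.
\]
Dividing by $\|b^{(N,\alpha_N)}\|_{\ell^p}^p = N\beta_N^p$ produces a quotient that behaves like $\beta_N^{1/2-\sigma}$ when $\sigma > 1/2$ (the integral being convergent) and like $\log(1/\beta_N)$ when $\sigma = 1/2$ (logarithmic divergence of $(u^4+1)^{-1/4}$ at infinity); in either case the quotient tends to infinity as $\beta_N \to 0$, which proves the supremum is infinite. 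The main technical obstacle is the rigorous root-counting in Step~2: one must verify that the formal roots actually lie in $\{|\zeta|<1\}$, so that they correspond to isolated discrete eigenvalues rather than spectrally embedded ``resonances'', and extract uniform density estimates sufficient to justify the Riemann-sum approximation. This asymptotic analysis runs parallel to the one underlying Theorem~\ref{thm:first}; the only new feature here is the singular weight $|\lambda^2-4|^{-\sigma}$, whose marginal integrability precisely at $\sigma = 1/2$ is the structural source of the sharp threshold in the statement.
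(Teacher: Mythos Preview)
Your proposal follows essentially the same route as the paper: the same rectangular-barrier family $T_{i\beta_N,N}$, an equivalent algebraic characterisation of the eigenvalues (your compatibility condition $(\zeta-\eta)\eta^N=\pm(1-\zeta\eta)$ is the paper's pair of polynomial equations in the variable $z=\eta$), and the same mechanism --- roughly $N$ eigenvalues sitting at height $\asymp\beta_N$ above $[-2,2]$ while the weight $|\lambda^2-4|^{-\sigma}$ blows up near $\pm 2$.

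Two organisational differences are worth flagging. First, the paper parametrises by $z$ (your $\eta$) rather than by $\zeta$: it solves exactly for $k=\zeta$ in terms of $z$ and obtains genuine polynomial equations for $z$, then checks $|k|<1$ a posteriori. This avoids your perturbative step $\eta\approx\zeta$, which is delicate precisely near the edge where $1-\zeta\eta$ is small. Second, and more substantively, the paper does \emph{not} push the eigenvalue asymptotics all the way to the endpoint. Instead it restricts to angles $\phi\in[\epsilon\pi,(1-\epsilon)\pi]$, where the expansion $\lambda_j=2\cos\phi_j+i\,n^{-2/3}+O(n^{-1}\log n)$ holds uniformly, bounds the quotient from below by an explicit $C_\sigma(\epsilon)$ with $C_\sigma(\epsilon)\to\infty$ as $\epsilon\to 0$, and then sends $\epsilon\to 0$ \emph{after} $n\to\infty$. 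This double-limit device sidesteps exactly the ``main technical obstacle'' you identify --- uniform root-counting up to the edge --- at the cost of a slightly less explicit final bound. Your direct Riemann-sum approach over the full edge region would also work, but it requires that uniform control and in particular a committed coupling of $\beta_N$ to $N$ (the paper fixes $\beta_n=n^{-2/3}$) so that the Riemann-sum step size $1/(N\sqrt{\beta_N})$ is small while the approximation $\eta\approx\zeta$ remains valid.
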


If we put $\sigma=1/2$, Theorem~\ref{thm:second} shows that~\eqref{eq:hans-kat_tau0} does not hold for general Jacobi operators. In other words, Theorem~\ref{thm:hans-kat} is no longer true when $\tau=0$.

\subsection{Discrete Schr{\" o}dinger operator with rectangular barrier  potential and complex coupling}

For $n\in\N$ and $\beta\in\C$, we consider the two-parameter family of discrete Schr{\"o}dinger operators $T=T_{\beta,n}$ determined by the potential 
\[
 b_{k}:=\begin{cases}
 		\beta,& \quad \mbox{ for } k\in\{1,2\dots,n\},\\
 		0,& \quad \mbox{ for } k\in\Z\setminus\{1,2\dots,n\}.
 		\end{cases}
\]
Alternatively, $T_{\beta,n}$ can be written in the form
\[
 T_{\beta,n}=J_{0}+\beta P_{n},
\]
where $J_{0}$ is the free Jacobi operator (or the discrete Laplacian) and $P_{n}$ the orthogonal projection onto $\spn\{e_{1},\dots,e_{n}\}$. The operator $T_{\beta,n}$ is a discrete analogue of the Schr{\" o}dinger operator with rectangular barrier  potential supported on the set $\{1,\dots,n\}$ and complex coupling parameter~$\beta$.

Our first goal is a general spectral analysis of $T_{\beta,n}$ which can be of independent interest. However, we restrict the coupling constant $\beta$ to purely imaginary which is sufficient for our later purpose. Without loss of generality, we can even assume $\beta=\ii h$ for $h>0$. The discrete spectrum of such an operator is located in the rectangular domain $[-2,2]+\ii(0,h]$. 

\begin{lem}\label{lem:rough_local_evls}
 Let $h>0$. If $\lambda\in\sigma_{d}(T_{\ii h,n})$, then
 \[
  -2\leq\Re \lambda\leq 2 \quad\mbox{ and }\quad 0<\Im\lambda\leq h,
 \]
 for all $n\geq2$.
\end{lem}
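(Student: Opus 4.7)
The plan is to enclose $\sigma(T_{\ii h,n})$ in a closed rectangle by means of the numerical range and then use the definition of the discrete spectrum to sharpen the imaginary lower bound. Since $T_{\ii h,n}=J_{0}+\ii h P_{n}$, with $J_{0}$ self-adjoint ($\spec(J_{0})=[-2,2]$) and $P_{n}$ an orthogonal projection, for any unit vector $\psi\in\ell^{2}(\Z)$ I would split
\[
\langle T_{\ii h,n}\psi,\psi\rangle=\langle J_{0}\psi,\psi\rangle+\ii h\|P_{n}\psi\|^{2}
\]
into real and imaginary parts. The real part belongs to $[-2,2]$ by the spectral theorem for $J_{0}$, while $\|P_{n}\psi\|^{2}\in[0,1]$, so the numerical range satisfies
\[
W(T_{\ii h,n})\subset[-2,2]+\ii[0,h].
\]
Since this rectangle is closed and $\sigma(A)\subset\overline{W(A)}$ for every bounded Hilbert space operator $A$, I obtain $-2\le\Re\lambda\le 2$ and $0\le\Im\lambda\le h$ for every $\lambda\in\sigma_{d}(T_{\ii h,n})$.

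To upgrade $0\le\Im\lambda$ to $0<\Im\lambda$, I would invoke the convention recalled in the introduction that $\sigma_{d}(T_{\ii h,n})\subset\C\setminus[-2,2]$; here this is justified by the fact that $T_{\ii h,n}$ is a finite-rank (hence compact) perturbation of $J_{0}$, so its essential spectrum equals $[-2,2]$. If $\Im\lambda=0$ held, then $\lambda$ would be real and the already-established bound $-2\le\Re\lambda\le2$ would force $\lambda\in[-2,2]$, contradicting $\lambda\in\sigma_{d}(T_{\ii h,n})$. Thus $\Im\lambda>0$, completing the proof. The argument is essentially a one-line numerical-range computation followed by a short topological remark; I do not anticipate any real obstacle, and in particular the assumption $n\ge2$ plays no role in this a priori location estimate (it will presumably be needed later to guarantee the existence of the eigenvalues whose location is being pinned down).
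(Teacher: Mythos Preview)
Your argument is correct and uses the same numerical-range enclosure as the paper for the bounds $-2\le\Re\lambda\le2$ and $0\le\Im\lambda\le h$. The only difference is in how the strict inequality $\Im\lambda>0$ is obtained: the paper works directly with a normalized eigenvector $\phi$, observes that $\Im\lambda=h\|P_{n}\phi\|^{2}\ge h(|\phi_{1}|^{2}+|\phi_{2}|^{2})$, and then uses the three-term recurrence to show that $\phi_{1}=\phi_{2}=0$ would force $\phi=0$; this is where the hypothesis $n\ge2$ actually enters. Your route---noting that a real $\lambda$ in $[-2,2]$ cannot lie in $\sigma_{d}(T_{\ii h,n})\subset\C\setminus[-2,2]$---is cleaner and, as you correctly observe, does not need $n\ge2$ at all. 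Both arguments are valid; yours is slightly more economical, while the paper's makes explicit why the eigenvector must live on the support of the potential.
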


\begin{proof}
 The proof is based on the enclosure of the spectrum by the numerical range.
 Let $\lambda\in\sigma_{d}(T_{\ii h,n})$ and $\phi\in\ell^{2}(\Z)$ be a corresponding normalized eigenvector. Then
 \[
  |\Re\lambda|=|\langle\phi,(\Re T_{\ii h,n})\phi\rangle|=|\langle\phi,J_{0}\phi\rangle|\leq\|J_{0}\|=2.
 \]
 Similarly, one has
 \[
  \Im\lambda=\langle\phi,(\Im T_{\ii h,n})\phi\rangle=h\langle\phi,P_{n}\phi\rangle=h\|P_{n}\phi\|^{2},
 \]
 which readily implies $\Im\lambda\leq h$ and also
 \[
  \Im\lambda\geq h\left(|\phi_{1}|^{2}+|\phi_{2}|^{2}\right)>0
 \]
 because $n\geq2$. The last expression cannot vanish indeed, since if $\phi_{1}=\phi_{2}=0$, then  it follows from the eigenvalue equation $T_{\ii h,n}\phi=\lambda\phi$ that $\phi=0$, contradicting the assumption $\|\phi\|=1$.
\end{proof}

Next, we look at the eigenvalues of $T_{\beta,n}$ more closely. By the Birman--Schwinger principle, $\lambda\notin[-2,2]$ is an eigenvalue of $T_{\beta,n}$ if and only if $-1$ is an eigenvalue of the the operator $\beta P_{n}(J_{0}-\lambda)^{-1}P_{n}$ which has finite rank. This observation provides us with a characteristic equation for the discrete spectrum of $T_{\beta,n}$:
\[
 \lambda\in\sigma_{d}\left(T_{\beta,n}\right)\quad\Leftrightarrow\quad\det(1+\beta P_{n}(J_{0}-\lambda)^{-1}P_{n})=0.
\]

Recall that the Joukowsky conformal mapping $k\mapsto k+k^{-1}$ maps bijectively the punctured unit disk $\{k\in\C \mid 0<|k|<1\}$ onto $\C\setminus[-2,2]$. Writing $\lambda=k+k^{-1}$, for $0<|k|<1$, a standard computation shows
\[
 (J_{0}-\lambda)^{-1}=\frac{k}{k^{2}-1}Q(k),
\]
where $Q(k)$ is the Laurent operator with entries $\left(Q(k)\right)_{i,j}=k^{|j-i|}$, see, for example, \cite[Prop.~2.6]{hun-sim_jat02}. Let $Q_{n}(k)$ denote the finite section matrix obtained from $Q(k)$ by restricting the indices to~$\{1,\dots,n\}$, i.e., $Q_{n}(k)=P_{n}Q(k)P_{n}\upharpoonleft\Ran P_{n}$. Spectral properties of the matrix~$Q_{n}(k)$, sometimes called the Kac--Murdock--Szeg{\H o} matrix, are studied in~\cite{fik_laa18} for a general $k\in\C$. Particularly, the characteristic polynomial of $Q_{n}(k)$ is expressible in terms of the Chebyshev polynomials of the second kind $U_{n}$, see~\cite[Eq.~(2.4)]{fik_laa18}. Using these facts, we obtain the expression
\begin{align}
 \det(1+\beta P_{n}(J_{0}-\lambda)^{-1}P_{n})&=\det\left(1+\frac{k\beta}{k^{2}-1}Q_{n}(k)\right)\nonumber\\
 &=\frac{k^{n}}{1-k^{2}}\left[U_{n}\left(\xi\right)-2kU_{n-1}\left(\xi\right)+k^{2}U_{n-2}\left(\xi\right)\right]\!, \label{eq:char_func_chebys}
\end{align}
where
\[
 \xi=\frac{k+k^{-1}-\beta}{2}.
\]

Taking further into account the well known identity for Chebyshev polynomials
\[
 U_{n}\left(\frac{z+z^{-1}}{2}\right)=\frac{z^{n+1}-z^{-n-1}}{z-z^{-1}}, \quad n\in\N_{0},
\]
it is natural to introduce a new parameter $z$ by the equation
\begin{equation}
 \beta=k+k^{-1}-z-z^{-1}.
\label{eq:bet_z_k}
\end{equation}
Then, using~\eqref{eq:char_func_chebys}, one gets the explicit formula
\[
 \det\left(1+\frac{k\beta}{k^{2}-1}Q_{n}(k)\right)=\frac{k^{2n}}{1-k^{2}}\frac{\beta^{n}}{(z-k)^{n}(1-kz)^{n}}\frac{z^{2n}(z-k)^{2}-(1-kz)^{2}}{z^{2}-1}.
\]
Zeros of the determinant are solutions of the equation
\[
 z^{2n}(z-k)^{2}-(1-kz)^{2}=0,
\]
which, when solved for $k=k(z)$, yields
\begin{equation}
 k=\frac{z^{n+1}-1}{z^{n}-z} \quad\mbox{ or }\quad k=\frac{z^{n+1}+1}{z^{n}+z}.
\label{eq:k_rel_z_plusminus}
\end{equation}
Inserting the above expressions for $k$ back into~\eqref{eq:bet_z_k}, we arrive at two polynomial equations
\begin{equation}
 \beta\left(z^{n+1}-1\right)\left(z^{n-1}-1\right)-z^{n-2}\left(z^{2}-1\right)^{2}=0
\label{eq:beta_z_minus}
\end{equation}
and
\begin{equation}
 \beta\left(z^{n+1}+1\right)\left(z^{n-1}+1\right)+z^{n-2}\left(z^{2}-1\right)^{2}=0,
\label{eq:beta_z_plus}
\end{equation}
for $n\geq2$. The solutions of equations~\eqref{eq:beta_z_minus} or~\eqref{eq:beta_z_plus} have the following properties whose verification is straightforward.

\begin{lem}\label{lem:sol_z_basic_prop}
 The solutions of \eqref{eq:beta_z_minus} or~\eqref{eq:beta_z_plus} are invariant under the transformation $z\leftrightarrow z^{-1}$. Suppose further that $\beta=\ii h$ with $h>0$. Then the only solutions of~\eqref{eq:beta_z_minus} located on the unit circle are two double roots $z=\pm1$ if $n$ is odd, and one double root $z=1$ if $n$ is even. Similarly, the only solution of~\eqref{eq:beta_z_plus} located on the unit circle is one double root $z=-1$ if $n$ is even, and no solution if $n$ is odd. In addition, if $n$ is odd, then the solutions of \eqref{eq:beta_z_minus} or~\eqref{eq:beta_z_plus} are invariant under the transformation $z\leftrightarrow -\overline{z}$ (symmetry w.r.t.\ the imaginary axis) and, if $n$ is even, then $z$ is a solution of~\eqref{eq:beta_z_minus} if and only if $-\overline{z}$ is a solution of~\eqref{eq:beta_z_plus}.
\end{lem}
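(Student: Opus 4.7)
The plan is to treat each of the four assertions as a direct algebraic verification, with the parity arguments being the only point where care is needed.

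For the $z\leftrightarrow z^{-1}$ invariance, I would substitute $z^{-1}$ into \eqref{eq:beta_z_minus} and multiply through by $z^{2n}$. One checks that $z^{2n}(z^{-n-1}-1)(z^{-n+1}-1)=(1-z^{n+1})(1-z^{n-1})=(z^{n+1}-1)(z^{n-1}-1)$ and $z^{2n}\cdot z^{-n+2}(z^{-2}-1)^{2}=z^{n-2}(z^{2}-1)^{2}$, which returns the original polynomial. The same rearrangement (with signs adjusted) works for \eqref{eq:beta_z_plus}.

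For the roots on the unit circle I would set $z=e^{\ii\theta}$ and use
\[
 e^{\ii m\theta}-1=2\ii\sin(m\theta/2)\,e^{\ii m\theta/2},\qquad e^{\ii m\theta}+1=2\cos(m\theta/2)\,e^{\ii m\theta/2},
\]
together with $(e^{2\ii\theta}-1)^{2}=-4\sin^{2}\theta\,e^{2\ii\theta}$. After cancelling the common factor $e^{\ii n\theta}$, equation~\eqref{eq:beta_z_minus} becomes
\[
\sin^{2}\theta-\ii h\sin((n+1)\theta/2)\sin((n-1)\theta/2)=0,
\]
while \eqref{eq:beta_z_plus} becomes
\[
-\sin^{2}\theta+\ii h\cos((n+1)\theta/2)\cos((n-1)\theta/2)=0.
\]
Separating real and imaginary parts (using $h>0$), the real part forces $\sin\theta=0$, so $\theta\in\{0,\pi\}$, and the imaginary part then selects the admissible cases by parity of $n$: for \eqref{eq:beta_z_minus} one gets $\theta=0$ always, and $\theta=\pi$ iff $n$ is odd; for \eqref{eq:beta_z_plus} neither $\theta=0$ (since $\cos(0)\neq0$) works, and $\theta=\pi$ is a root iff $n$ is even. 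Substituting $z=\pm1+w$ and expanding to order $w^{2}$, I would verify that each of these roots contributes a factor of the form $[\ii h(n^{2}-1)\pm4]w^{2}$, which is nonzero for $h>0$, confirming that each is a double root and nothing more.

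For the third and fourth claims I would apply complex conjugation to \eqref{eq:beta_z_minus} (noting $\overline{\ii h}=-\ii h$) and then substitute $w=-\overline{z}$, so $\overline{z}=-w$. For $n$ odd one has $(-w)^{n\pm1}=w^{n\pm1}$ and $(-w)^{n-2}=-w^{n-2}$; after collecting signs the equation returns to \eqref{eq:beta_z_minus} in the variable $w$, establishing the reflection $z\leftrightarrow-\overline{z}$ symmetry (the same computation works for \eqref{eq:beta_z_plus}). For $n$ even, $(-w)^{n\pm1}=-w^{n\pm1}$ and $(-w)^{n-2}=w^{n-2}$, so the conjugated version of \eqref{eq:beta_z_minus} turns, after multiplication by $-1$, into \eqref{eq:beta_z_plus} in the variable $w$, giving the claimed exchange of the two equations. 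The only real obstacle is book-keeping the parity-dependent signs, which is why the lemma is labelled routine.
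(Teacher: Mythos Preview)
Your proposal is correct and follows exactly the direct algebraic verification the paper has in mind; the paper itself gives no proof, declaring the verification ``straightforward.'' Your write-up simply spells out the routine checks (the $z\leftrightarrow z^{-1}$ rescaling, the real/imaginary splitting on the unit circle, the Taylor expansion for the double-root count, and the parity-dependent sign bookkeeping under $z\mapsto-\overline{z}$) that the paper omits.
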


Lemma~\ref{lem:sol_z_basic_prop} allows us to restrict the analysis of the solutions of~\eqref{eq:beta_z_minus} and~\eqref{eq:beta_z_plus} to the unit disk $|z|<1$. 
Since the polynomials in~\eqref{eq:beta_z_minus} and~\eqref{eq:beta_z_plus} are of degree $2n$, 
Lemma~\ref{lem:sol_z_basic_prop} implies that the number of roots (counting multiplicities) located in the unit disk equals $n-1$ for each equation~\eqref{eq:beta_z_minus} and~\eqref{eq:beta_z_plus} if $n$ is even, and $n-2$ for equation~\eqref{eq:beta_z_minus} and $n$ for equation~\eqref{eq:beta_z_plus} provided that $n$ is odd. So the total multiplicity of roots of equations~\eqref{eq:beta_z_minus} and~\eqref{eq:beta_z_plus} together equals $2n-2$ regardless the parity of $n$.

Not all of these solutions, however, correspond to an eigenvalue of~$T_{\beta,n}$ for $\beta=\ii h$ and $h>0$. 

\begin{prop}\label{prop:spec_T_char}
 Let $h>0$, and $n\geq2$. Then 
 \[
 \lambda\in\sigma_{d}(T_{\ii h,n}) \quad \Longleftrightarrow \quad  \lambda=\ii h +z+z^{-1}, 
 \]
 for $z\in\C$, $|z|<1$, $\Im z>0$, which is either a solution of~\eqref{eq:beta_z_minus}
 or~\eqref{eq:beta_z_plus}, with $\beta=\ii h$, satisfying the constraint $|z^{n+1}-1|<|z^{n}-z|$ or $|z^{n+1}+1|<|z^{n}+z|$, respectively.
\end{prop}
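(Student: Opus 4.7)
The plan is to verify both implications by exploiting the Birman--Schwinger reduction together with the algebraic identities for the determinant already established above.

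For the forward direction, let $\lambda\in\sigma_{d}(T_{\ii h,n})$. Lemma~\ref{lem:rough_local_evls} guarantees $\Im\lambda>0$, so $\lambda\notin\sigma_{ess}(T_{\ii h,n})=[-2,2]$, and the Birman--Schwinger principle yields $\det(1+\ii h\,P_{n}(J_{0}-\lambda)^{-1}P_{n})=0$. Writing $\lambda=k+k^{-1}$ for the unique $k$ with $0<|k|<1$ and introducing the auxiliary variable $z$ through $\beta=k+k^{-1}-z-z^{-1}$ (so that $z+z^{-1}=\lambda-\ii h$), the factorization displayed above reduces the determinant equation to $z^{2n}(z-k)^{2}=(1-kz)^{2}$, equivalently to one of the two branches in~\eqref{eq:k_rel_z_plusminus}; re-substituting into $\beta=\ii h$ then produces the polynomial equation~\eqref{eq:beta_z_minus} or~\eqref{eq:beta_z_plus}. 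By bijectivity of the Joukowsky map on $0<|z|<1$, the relation $z+z^{-1}=\lambda-\ii h$ determines $z$ uniquely in the punctured unit disk, and the condition $|k|<1$ translates via~\eqref{eq:k_rel_z_plusminus} to the stated inequality $|z^{n+1}-1|<|z^{n}-z|$ or $|z^{n+1}+1|<|z^{n}+z|$, respectively.

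To pin down $\Im z>0$, I would first sharpen Lemma~\ref{lem:rough_local_evls} to the strict bound $\Im\lambda<h$: that lemma's proof already yields $\Im\lambda=h\|P_{n}\phi\|^{2}$, so equality would force $\phi$ to be supported in $\{1,\dots,n\}$; the eigenvalue equation at index $0$ then reads $\phi_{-1}+\phi_{1}=0$, i.e.\ $\phi_{1}=0$, and forward induction gives $\phi\equiv 0$, contradicting normalization. Hence $\Im(z+z^{-1})=\Im\lambda-h<0$; writing $z=re^{\ii\theta}$ with $0<r<1$, this reads $(r-r^{-1})\sin\theta<0$, which forces $\sin\theta>0$, i.e.\ $\Im z>0$. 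Conversely, given $z$ in the upper open unit disk satisfying~\eqref{eq:beta_z_minus} (resp.~\eqref{eq:beta_z_plus}) with the stated inequality, I would define $k$ by the corresponding formula in~\eqref{eq:k_rel_z_plusminus}: the inequality directly gives $|k|<1$, while $k\ne 0$ follows from $|z|<1$ since $z^{n+1}=\pm 1$ is impossible on the open disk. Setting $\lambda:=k+k^{-1}=\ii h+z+z^{-1}$ then places $\lambda\notin[-2,2]$, and reading the determinant factorization backwards shows $\det(1+\ii h P_{n}(J_{0}-\lambda)^{-1}P_{n})=0$, whence $\lambda\in\sigma_{d}(T_{\ii h,n})$.

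The main obstacle I expect is the bookkeeping of extraneous roots produced when clearing denominators in the determinant factorization: one must verify that none of the factors $k$, $1-k^{2}$, $z^{2}-1$, $(z-k)^{n}$, $(1-kz)^{n}$ can vanish in the regime of interest, so that the polynomial equations~\eqref{eq:beta_z_minus}--\eqref{eq:beta_z_plus} faithfully track the zero set of the Birman--Schwinger determinant restricted to the upper open unit disk. The strict version $\Im\lambda<h$ needed for the sign of $\Im z$ is the other technical point, since Lemma~\ref{lem:rough_local_evls} as stated only delivers the non-strict upper bound.
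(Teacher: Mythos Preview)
Your proposal is correct and follows essentially the same Birman--Schwinger route as the paper. The one noteworthy difference is how you force $\Im z>0$: the paper keeps the non-strict bound $\Im\lambda\le h$ from Lemma~\ref{lem:rough_local_evls} and rules out $\Im z<0$ (which would give $\Im\lambda>h$) and $\Im z=0$ (which would give $|\Re\lambda|>2$) separately, whereas you first upgrade Lemma~\ref{lem:rough_local_evls} to the strict inequality $\Im\lambda<h$ and dispatch both cases at once via $\Im(z+z^{-1})<0$. Your sharpening argument (support in $\{1,\dots,n\}$ forces $\phi_1=0$ via the equation at index~$0$, then induction kills $\phi$) is valid, and it also cleanly justifies why $\lambda-\ii h\notin[-2,2]$ so that the Joukowsky map picks out a unique $z$ in the open disk, making the paper's separate treatment of the unit-circle roots $z=\pm 1$ unnecessary. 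The bookkeeping of extraneous factors you flag as the main obstacle is routine in the regime $0<|k|,|z|<1$, $\beta\ne 0$: indeed $z=k$ would force $\beta=0$, $kz=1$ is impossible since $|kz|<1$, and $z^2=1$ contradicts $|z|<1$.
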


\begin{proof}
First, note that the Joukowsky transform maps the upper/lower half of the unit disk onto the lower/upper half-plane, i.e., if $0<|z|<1$ and $\Im z\gtrless0$, then $\Im(z+z^{-1})\lessgtr0$. This implies that, among the solutions of~\eqref{eq:beta_z_minus} and~\eqref{eq:beta_z_plus} inside the unit disk, only those with positive imaginary part are of interest. Indeed, if $z$ is a solution of~\eqref{eq:beta_z_minus} or~\eqref{eq:beta_z_plus} with $\Im z<0$, then by~\eqref{eq:bet_z_k}, the equation for the eigenvalue reads $\lambda=z+z^{-1}+\ii h$. But then
\[
 \Im\lambda=h+\Im(z+z^{-1})>h
\]
which is in contradiction with Lemma~\ref{lem:rough_local_evls}. If $\Im z=0$ and $z\notin\{0,\pm1\}$, then $|\Re \lambda|>2$ which is again impossible by Lemma~\ref{lem:rough_local_evls}.

Yet another restriction to solutions of~\eqref{eq:beta_z_minus} and~\eqref{eq:beta_z_plus} has to be imposed. It comes from the necessary requirement $|k|<1$, where $k$ is given by the respective formula from~\eqref{eq:k_rel_z_plusminus} depending on whether $z$ is a solution of~\eqref{eq:beta_z_minus} or~\eqref{eq:beta_z_plus}. On the other hand, if $z$ is a solution of~\eqref{eq:beta_z_minus} or~\eqref{eq:beta_z_plus}, $|z|<1$, $\Im z>0$, and $|k|<1$, then $\lambda=z+z^{-1}+\ii h$ is an eigenvalue of $T_{\ii h,n}$.

Finally, it is straightforward to check that the solutions of \eqref{eq:beta_z_minus} and~\eqref{eq:beta_z_plus} located on the unit circle %, see Lemma~\ref{lem:sol_z_basic_prop}, 
 do not give rise to an eigenvalue. Indeed, 
%for any admissible solution, one finds from
since these solutions satisfy $z\in\{\pm1\}$ (depending on the parity of $n$, see Lemma~\ref{lem:sol_z_basic_prop}), taking the respective limit $z\to\pm 1$ in~\eqref{eq:k_rel_z_plusminus} and using L'Hospital's rule, one finds that
\[
 |k|=\frac{n+1}{n-1}>1.
\]
\end{proof}

A numerical illustration of Proposition~\ref{prop:spec_T_char} is shown in Figure~\ref{fig:num_illust}.

\begin{figure}[htb!]
    \centering
    \begin{subfigure}[b]{0.99\textwidth}
        \includegraphics[width=\textwidth]{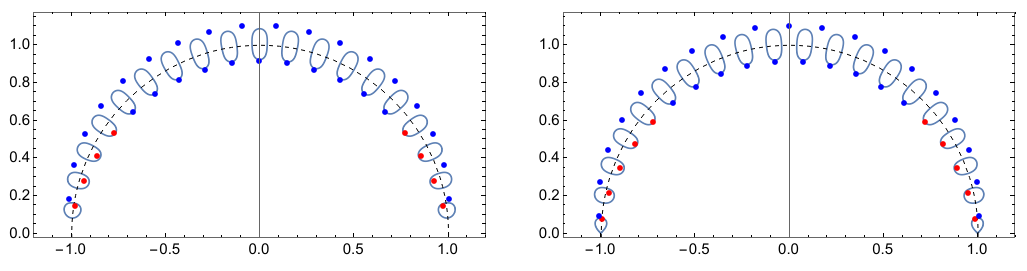}
    \end{subfigure}
    \vskip4pt
    \begin{subfigure}[b]{0.92\textwidth}
			\begin{flushright}    
           \includegraphics[width=\textwidth]{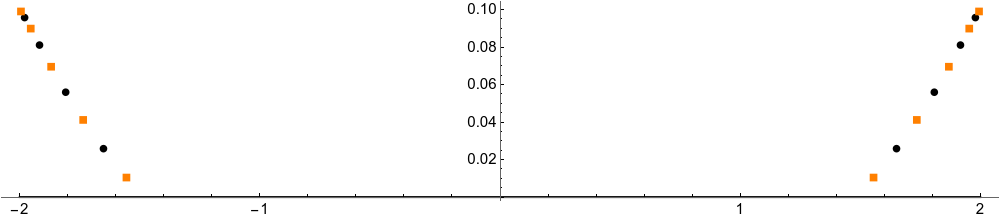}
            \end{flushright}
    \end{subfigure}
       \caption{For parameters $\beta=\ii/10$ and $n=39$, the plots on top show the solutions of~\eqref{eq:beta_z_minus} (left) and \eqref{eq:beta_z_plus} (right) in the upper $z$-plane. The small oval-shaped regions are given by the inequalities $|z^{n+1}-1|<|z^{n}-z|$ (left) and $|z^{n+1}+1|<|z^{n}+z|$ (right), see Proposition~\ref{prop:spec_T_char}. Red dots indicate solutions that are located inside the regions and hence give rise to eigenvalues of $T_{\beta,n}$. Blue dots stay outside the regions. The eigenvalues $\lambda=\ii h+z+z^{-1}$ of $T_{\beta,n}$ are visualized on bottom. Black balls indicate eigenvalues determined by respective solutions of~\eqref{eq:beta_z_minus} and orange squares indicate those given by~\eqref{eq:beta_z_plus}.}
   \label{fig:num_illust}
\end{figure}

\subsection{On the conjecture and the open problem of Hansmann and Katriel}

In this subsection, we let $\beta$ to be purely imaginary and $n$-dependent. Namely $\beta=\beta_{n}:=\ii n^{-2/3}$, which means that
\[
 b_{k}:=\begin{cases}
 		\ii n^{-2/3},& \quad \mbox{ for } k\in\{1,2\dots,n\},\\
 		0,& \quad \mbox{ for } k\in\Z\setminus\{1,2\dots,n\},
 		\end{cases}
\]
and we consider the sequence of discrete Schr{\" o}dinger operators $T_{n}:=T_{\beta_{n},n}$. Note that, as $n\to\infty$, the support of the potential sequence~$b$ is growing while its magnitude $|\beta_{n}|$ tends to zero. The $\ell^{p}$-norm of $b$ is
\[
 \|b\|_{\ell^{p}}=n^{\frac{1}{p}-\frac{2}{3}}.
\]

By means of this particular choice of a sequence of discrete Schr{\" o}dinger operators, we establish Theorems~\ref{thm:first} and~\ref{thm:second}. First, we focus on the statement of Theorem~\ref{thm:first} which follows readily from the following proposition.

\begin{prop}\label{prop:h-k_conj}
 For $\omega<p$, one has
 \[
 \lim_{n\to\infty}n^{\frac{2p-3}{3}}\sum_{\lambda\in\sigma_{d}(T_{n})}\!\!\left(\dist(\lambda,[-2,2])\right)^{\omega}=\infty.
 \]
\end{prop}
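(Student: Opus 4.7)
The plan is to produce, via Proposition~\ref{prop:spec_T_char}, a family of $\asymp n$ eigenvalues of $T_n$, each at distance of order $n^{-2/3}$ from $[-2,2]$. I would work exclusively with equation~\eqref{eq:beta_z_minus} for $\beta=\ii n^{-2/3}$, in the regime where $|z|$ is slightly below $1$ with $|z|^n$ of order $n^{-2/3}$: in this regime $(z^{n\pm1}-1)=-1+O(n^{-2/3})$, so~\eqref{eq:beta_z_minus} is a small perturbation of the simpler equation
\[
H(z):=\beta-z^{n-2}(z^{2}-1)^{2}=0.
\]
Parametrizing $z=re^{\ii\phi}$ with $\phi\in(0,\pi)$ and using $|z^2-1|\approx 2\sin\phi$ and $\arg(z^2-1)\approx\phi+\pi/2$, a separation of modulus and argument yields exact zeros $z^*_j$ of $H$ located at $\phi^*_j=(4j-1)\pi/(2n)+O(\log n/n^2)$ (so $\sin(n\phi^*_j)=-1+o(1)$) with radii $r^*_j=1-\tfrac{2}{3}\log n/n + O(1/n)$, whence $(r^*_j)^n\asymp n^{-2/3}$. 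Restricting to $j$ for which $\phi^*_j\in[\pi/4,3\pi/4]$ keeps $\asymp n$ such zeros, separated at spacing $\asymp 1/n$.

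Next, I would apply Rouch\'e's theorem comparing $F$ (the left-hand side of~\eqref{eq:beta_z_minus}) to $H$: a direct computation gives $F-H=\beta(z^{2n}-z^{n+1}-z^{n-1})$, hence $|F-H|\leq Cn^{-4/3}$ near $z^*_j$. Differentiating $H$ at $z^*_j$ (and using $z^{n-2}(z^2-1)^2 = \beta$ there) gives $|H'(z^*_j)|\asymp n^{1/3}$ uniformly for $\phi^*_j\in[\pi/4,3\pi/4]$. Taking $\delta:=n^{-4/3}$, the linearization of $H$ gives $|H(z)|\asymp n^{-1}$ on the circle $|z-z^*_j|=\delta$, which dominates $|F-H|$; Rouch\'e's theorem thus places a unique root $\tilde z_j$ of $F$ in each disk $\{|z-z^*_j|<\delta\}$, and the disks are pairwise disjoint since $2\delta\ll 1/n$. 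For the Proposition~\ref{prop:spec_T_char} constraint $|z^{n+1}-1|<|z^n-z|$, I would use the identity
\[
|z^n-z|^2-|z^{n+1}-1|^2=-(1-r^2)(1-r^{2n})-4r^{n+1}\sin(n\phi)\sin\phi,
\]
which at $z^*_j$ (where $\sin(n\phi^*_j)\approx-1$) is dominated by the positive contribution $4(r^*_j)^{n+1}\sin\phi^*_j\asymp n^{-2/3}$, outweighing the negative term $-(1-r^2)(1-r^{2n})\asymp -(\log n)/n$. Continuity propagates the inequality across the Rouch\'e disk, so each $\tilde z_j$ yields an eigenvalue $\lambda_j:=\ii n^{-2/3}+\tilde z_j+\tilde z_j^{-1}$ of $T_n$.

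Finally, writing $\tilde z_j=\tilde r_je^{\ii\tilde\phi_j}$ with $1-\tilde r_j=O(\log n/n)$ and expanding $\tilde z_j+\tilde z_j^{-1}=(\tilde r_j+\tilde r_j^{-1})\cos\tilde\phi_j+\ii(\tilde r_j-\tilde r_j^{-1})\sin\tilde\phi_j$, one obtains $|\Re\lambda_j|\leq\sqrt{2}+o(1)<2$ (by the range of $\phi_j$) and $\Im\lambda_j=(1+o(1))\,n^{-2/3}$ uniformly in $j$, so that $\dist(\lambda_j,[-2,2])=\Im\lambda_j\asymp n^{-2/3}$. Summing over the $\asymp n$ constructed eigenvalues gives
\[
\sum_{\lambda\in\sigma_d(T_n)}(\dist(\lambda,[-2,2]))^\omega\gtrsim n\cdot n^{-2\omega/3}=n^{1-2\omega/3},
\]
and multiplying by $n^{(2p-3)/3}$ yields a lower bound $\gtrsim n^{2(p-\omega)/3}$, which diverges precisely when $\omega<p$. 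The main obstacle is the Rouch\'e step: both $H$ and $F-H$ vanish as $n\to\infty$, so the radius $\delta$ must be tuned so that the disks remain disjoint while the linear growth of $H$ dominates $|F-H|$; once the bounds $|F-H|=O(n^{-4/3})$ and $|H'(z^*_j)|\asymp n^{1/3}$ are in place, the remainder is a routine asymptotic expansion.
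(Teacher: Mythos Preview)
Your proposal is correct and follows essentially the same route as the paper: locate $\asymp n$ roots of~\eqref{eq:beta_z_minus} in the region $\phi\in[\pi/4,3\pi/4]$, $r\approx 1-\tfrac{2}{3}(\log n)/n$ (so that $r^n\asymp n^{-2/3}$), check the constraint of Proposition~\ref{prop:spec_T_char}, and conclude $\dist(\lambda_j,[-2,2])=\Im\lambda_j\asymp n^{-2/3}$ uniformly over $\asymp n$ eigenvalues. The only technical differences are that the paper extracts the root asymptotics~\eqref{eq:phi_j_asympt}--\eqref{eq:r_j_asympt} directly from the full equation (via the leading balance $4\ii(\sin^2\phi)n^{2/3}r^ne^{\ii n\phi}=1+O(n^{-1/2})$) rather than solving the truncation $H(z)=\beta-z^{n-2}(z^2-1)^2=0$ first and transferring by Rouch\'e, and that the paper verifies $|k_j|<1$ through an asymptotic expansion of $k_j$ itself rather than through your closed-form identity for $|z^n-z|^2-|z^{n+1}-1|^2$; neither variation changes the substance of the argument.
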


\begin{proof}
 We make use of the characterization of discrete eigenvalues of~$T_{n}$ given in Proposition~\ref{prop:spec_T_char} via solutions of the equations \eqref{eq:beta_z_minus} and~\eqref{eq:beta_z_plus}. In fact, for the purpose of this proof, it is sufficient to focus on solutions of~\eqref{eq:beta_z_minus} located in a particular subregion of the unit disk. 
 
 More concretely, we seek solutions $z=re^{\ii\phi}$ of the equation~\eqref{eq:beta_z_minus}, with $\beta=\ii n^{-2/3}$, in the compact region determined by the restrictions
 \begin{equation}
 \frac{\pi}{4}\leq\phi\leq\frac{3\pi}{4} \quad\mbox{ and }\quad 1-\frac{1}{\sqrt{n}}\leq r \leq1-c\frac{\log n}{n},
 \label{eq:phi_r_restrict}
 \end{equation}
 where $c\in(1/2,2/3)$ is arbitrary but fixed. Actually, the choice for the range of~$\phi$ is taken for the sake of concreteness, any closed subinterval of $(0,\pi)$ could be taken.
 It follows from~\eqref{eq:phi_r_restrict} that
 \[
  r^{n}\leq\left(1-c\frac{\log n}{n}\right)^{n}=n^{-c}\left(1+O\left(\frac{\log^{2} n}{n}\right)\right), \quad n\to\infty.
 \]
 In particular, we may write
 \begin{equation}
  r^{n}=O\left(n^{-c}\right), \quad n\to\infty.
  \label{eq:r_n_asympt}
 \end{equation}
 On the other hand, again according to~\eqref{eq:phi_r_restrict}, one has
 \begin{equation}
  r=1+O\left(\frac{1}{\sqrt{n}}\right), \quad n\to\infty.
  \label{eq:r_asympt}
 \end{equation}
 
 For $z=re^{\ii\phi}$ and $\beta=\ii n^{-2/3}$, the equation~\eqref{eq:beta_z_minus} reads
 \[
  \ii n^{-2/3}\left(1-r^{n+1}e^{\ii(n+1)\phi}\right)\left(1-r^{n-1}e^{\ii(n-1)\phi}\right)=r^{n}e^{\ii n\phi}\left(re^{\ii\phi}-r^{-1}e^{-\ii\phi}\right)^{2}.
 \]
 Using~\eqref{eq:r_n_asympt} and~\eqref{eq:r_asympt}, one gets the asymptotic equality
 \[
  \ii n^{-2/3}\left(1+O\left(n^{-c}\right)\right)=-4\left(\sin^{2}\phi\right)r^{n}e^{\ii n\phi}\left(1+O\left(\frac{1}{\sqrt{n}}\right)\right),
 \]
 for $n\to\infty$. Note that the error terms actually hold uniformly in~$\phi$. Notice also that the term $\sin^{2}\phi$ stays bounded away from zero by our assumptions. More precisely, $\sin^{2}\phi\in[1/2,1]$ which follows from the restriction on~$\phi$ from~\eqref{eq:phi_r_restrict}.
 Finally, taking also into account that $c\in(1/2,2/3)$, we arrive at the asymptotic formula
 \begin{equation}
  4\ii\left(\sin^{2}\phi\right)n^{2/3}r^{n}e^{\ii n\phi}=1+O\left(\frac{1}{\sqrt{n}}\right), \quad n\to\infty.
  \label{eq:asymt_eq_solut}
 \end{equation}
 
 Taking the arguments in~\eqref{eq:asymt_eq_solut}, one observes that the argument of a solution has to fulfill
 \begin{equation}
  \phi=\phi_{j}=\frac{\pi(4j-1)}{2n}+O\left(\frac{1}{n^{3/2}}\right), \quad n\to\infty,
 \label{eq:phi_j_asympt}
 \end{equation}
 for $j\in\Z$. Bearing in mind the supposed restriction on~$\phi$ from~\eqref{eq:phi_r_restrict}, we choose the range for the index $j$ to be
 \begin{equation}
  \frac{n+2}{8}\leq j \leq \frac{3n+2}{8}.
 \label{eq:range_j}
 \end{equation}

 Taking modulus in~\eqref{eq:asymt_eq_solut}, one obtains for the modulus of a solution 
 \[
  r_{j}=\left[\frac{n^{-2/3}}{4\sin^{2}\phi_{j}}\left(1+O\left(\frac{1}{\sqrt{n}}\right)\right)\right]^{1/n}, \quad n\to\infty.
 \]
 Since $4\sin^{2}\phi_{j}\in[2,4]$ for any $j$ satisfying~\eqref{eq:range_j}, 
 \[
 \left(4\sin^{2}\phi_{j}\right)^{-1/n}=1+O\left(\frac{1}{n}\right),\quad n\to\infty,
 \]
 uniformly for all $j$ admissible. Then a straightforward calculation yields
 \begin{equation}
  r_{j}=1-\frac{2}{3}\frac{\log n}{n}+O\left(\frac{1}{n}\right), \quad n\to\infty,
 \label{eq:r_j_asympt}
 \end{equation}
 uniformly in $j$. Note that the found $r_{j}$ fulfills the restriction~\eqref{eq:phi_r_restrict} for $n$ sufficiently large. In total, we see that there are asymptotically $n/4$ solutions $z_{j}=r_{j}e^{\ii\phi_{j}}$ of~\eqref{eq:beta_z_minus} within the region~\eqref{eq:phi_r_restrict}, with $j$ as in~\eqref{eq:range_j}, and asymptotic expansions for their arguments and moduli are given by equations~\eqref{eq:phi_j_asympt} and~\eqref{eq:r_j_asympt}.

 Next, we show that the found solutions $z_{j}$ give rise to eigenvalues of~$T_{n}$, if $n$ is sufficiently large. To this end, according to Proposition~\ref{prop:spec_T_char}, one has to check that $|k_{j}|<1$, where
 \[
  k_{j}:=\frac{1-z_{j}^{n+1}}{z_{j}-z_{j}^{n}}.
 \]
 The verification proceeds as follows. Taking~\eqref{eq:r_n_asympt} into account, we obtain
 \[
  \frac{1-z_{j}^{n+1}}{1-z_{j}^{n-1}}=\left(1-z_{j}^{n+1}\right)\left(1+z_{j}^{n-1}+O\left(n^{-2c}\right)\right)=1-\left(z_{j}-z_{j}^{-1}\right)z_{j}^{n}+O\left(n^{-2c}\right),
 \]
for $n\to\infty$. Using further that $z_{j}$ is a solution of~\eqref{eq:beta_z_minus}, together with formulas~\eqref{eq:r_n_asympt} and~\eqref{eq:r_j_asympt}, we get
\[
 \left(z_{j}-z_{j}^{-1}\right)z_{j}^{n}=\ii n^{-2/3}\frac{(1-z_{j}^{n+1})(1-z_{j}^{n-1})}{z_{j}-z_{j}^{-1}}=\frac{n^{-2/3}}{2\sin\phi_{j}}\left(1+O\left(n^{-c}\right)\right), \quad n\to\infty.
\]
In total, we have
\[
 \frac{1-z_{j}^{n+1}}{1-z_{j}^{n-1}}=1-\frac{n^{-2/3}}{2\sin\phi_{j}}+O\left(n^{-2c}\right), \quad n\to\infty,
\]
where $c\in(1/2,2/3)$. Hence, using~\eqref{eq:r_j_asympt} once more, we arrive at the expansion
\[
 k_{j}=e^{-\ii\phi_{j}}\left(1-\frac{n^{-2/3}}{2\sin\phi_{j}}+O\left(\frac{\log n}{n}\right)\right), \quad n\to\infty.
\]
Since $\sin\phi_{j}>0$, we observe that $|k_{j}|<1$ for $n$ sufficiently large. Moreover, for the respective eigenvalue, we obtain 
\begin{equation}
\lambda_{j}=k_{j}+k_{j}^{-1}=2\cos\phi_{j}+\ii n^{-2/3}+O\left(\frac{\log n}{n}\right), \quad n\to\infty.
\label{eq:lam_j_asympt}
\end{equation}

Consequently, it follows from~\eqref{eq:lam_j_asympt} that
\begin{equation}
 \dist(\lambda_{j},[-2,2])=\Im\lambda_{j}=n^{-2/3}+O\left(\frac{\log n}{n}\right), \quad n\to\infty. 
\label{eq:dist_asympt}
\end{equation}
Recall that the indices $j$ are restricted as in~\eqref{eq:range_j} and so the number of eigenvalues of $T_{n}$, which the analysis is restricted to, is asymptotically $n/4$ for large $n$. Thus, we may estimate
\[
 \sum_{\lambda\in\sigma_{d}(T_{n})}\!\!\left(\dist(\lambda,[-2,2])\right)^{\omega}\geq\frac{n}{4}\left(n^{-2/3}+O\left(\frac{\log n}{n}\right)\right)^{\omega}=\frac{n^{1-2\omega/3}}{4}\left(1+O\left(\frac{\log n}{n^{1/3}}\right)\right),
\]
for $n\to\infty$. Therefore
\[
 n^{\frac{2p-3}{3}}\sum_{\lambda\in\sigma_{d}(T_{n})}\!\!\left(\dist(\lambda,[-2,2])\right)^{\omega}\geq\frac{n^{2(p-\omega)/3}}{4}\left(1+O\left(\frac{\log n}{n^{1/3}}\right)\right), \quad n\to\infty,
\]
from which the statement follows.
\end{proof}

The chosen sequence of operators~$T_{n}$ also exhibits properties that imply Theorem~\ref{thm:second}. These properties are established in the next result. 

\begin{prop}
 For any $\sigma\geq 1/2$ and $p\geq1$, one has
 \[
 \lim_{n\to\infty}n^{\frac{2p-3}{3}}\sum_{\lambda\in\sigma_{d}(T_{n})}\frac{\dist\left(\lambda,[-2,2]\right)^{p}}{|\lambda^{2}-4|^{\sigma}}=\infty.
\]
\end{prop}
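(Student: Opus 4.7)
The plan is to reuse the family $T_n=T_{\ii n^{-2/3},n}$ from Proposition~\ref{prop:h-k_conj} but to extract eigenvalues whose real part is close to~$\pm 2$, so that $|\lambda^2-4|$ is small and the denominator amplifies the sum. The eigenvalues produced in Proposition~\ref{prop:h-k_conj} correspond to $\phi_j$ in a fixed closed subinterval of $(0,\pi)$, so $|\lambda_j^2-4|\asymp 1$ and the sum only reaches $\asymp n^{1-2p/3}$, which is exactly killed by the prefactor $n^{(2p-3)/3}$. The remedy is to extend the admissible range of $\phi$ to a shrinking window $[\phi_{\ast},\pi/2]$ with $\phi_{\ast}=\phi_{\ast}(n)=n^{-\alpha}$ for a small fixed $\alpha>0$.

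Granted this extension, the eigenvalues $\lambda_j$ still satisfy~\eqref{eq:lam_j_asympt} with $\phi_j=\pi(4j-1)/(2n)+O(n^{-3/2})$, so $\dist(\lambda_j,[-2,2])=\Im\lambda_j\asymp n^{-2/3}$, while $|\lambda_j+2|$ stays bounded away from~$0$ and $|\lambda_j-2|\asymp\phi_j^2$ (using $\phi_\ast^2\gg n^{-2/3}$, which holds for any $\alpha<1/3$). Hence $|\lambda_j^2-4|^\sigma\asymp\phi_j^{2\sigma}$ uniformly in the admissible $j$, and using the approximately uniform spacing $\phi_j\asymp 2\pi j/n$ together with $j_{\min}\asymp n\phi_\ast/(2\pi)$, one estimates
\[
\sum_{\lambda\in\sigma_d(T_n)}\frac{\dist(\lambda,[-2,2])^p}{|\lambda^2-4|^\sigma}\gtrsim n^{-2p/3}\sum_{j=j_{\min}}^{\lfloor n/4\rfloor}\!\left(\frac{n}{2\pi j}\right)^{2\sigma}.
\]
For $\sigma=1/2$ the inner sum is $\asymp n\log(1/\phi_{\ast})\asymp\alpha n\log n$, producing the lower bound $\asymp n^{1-2p/3}\log n$; for $\sigma>1/2$ it is dominated by the term $j=j_{\min}$, evaluating to $\asymp n\phi_{\ast}^{1-2\sigma}=n^{1+\alpha(2\sigma-1)}$, producing $\asymp n^{1-2p/3+\alpha(2\sigma-1)}$. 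Multiplying by $n^{(2p-3)/3}$ leaves $\log n$ in the first case and $n^{\alpha(2\sigma-1)}$ in the second, both of which tend to~$\infty$.

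The main obstacle is therefore purely technical: I must verify that the analysis of Proposition~\ref{prop:h-k_conj} survives in the shrinking regime $\phi\in[n^{-\alpha},\pi/2]$. The critical ingredient is the bound $r^n=O(n^{-c})$ with $c\in(1/2,2/3)$, which combined with $r^n\asymp n^{-2/3}(4\sin^2\phi)^{-1}\asymp n^{2\alpha-2/3}$ forces $\alpha<1/12$. Once this is secured, the error terms $O(1/\sqrt{n})$, $O(n^{-c})$ and $O(\log n/n)$ appearing in~\eqref{eq:asymt_eq_solut}--\eqref{eq:lam_j_asympt} remain $o(1)$ uniformly in $\phi$, and the Birman--Schwinger condition $|k_j|<1$ from Proposition~\ref{prop:spec_T_char} continues to hold since $|k_j|=1-n^{-2/3}/(2\sin\phi_j)+O(\log n/n)$ with $n^{-2/3}/(2\sin\phi_j)\asymp n^{\alpha-2/3}$ still dominating the error for $\alpha<1/12$. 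The restriction $\alpha<1/12$ is tight for the present method but irrelevant for the conclusion, as any fixed positive $\alpha$ yields divergence in both the $\sigma=1/2$ and $\sigma>1/2$ cases.
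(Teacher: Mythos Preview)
Your plan is sound and the final estimates are correct, but the route differs from the paper's. You let the angular window shrink with $n$, taking $\phi\in[n^{-\alpha},\pi/2]$ for a fixed $\alpha\in(0,1/12)$, and then run a single limit $n\to\infty$. The paper instead keeps the window \emph{fixed}, $\phi\in[\epsilon\pi,(1-\epsilon)\pi]$ with $\epsilon>0$ independent of $n$, so that the asymptotics of Proposition~\ref{prop:h-k_conj} apply verbatim (since $\sin\phi$ is bounded below by a constant). For each fixed $\epsilon$ the Riemann sum $\frac{1}{n}\sum_j|\lambda_j^2-4|^{-\sigma}$ converges to $\frac{1}{2^{2\sigma+1}\pi}\int_{\epsilon\pi}^{(1-\epsilon)\pi}(\sin x)^{-2\sigma}\,\dd x$, and this integral diverges as $\epsilon\to0+$; a double-limit argument then yields the claim.

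The trade-off is this: the paper avoids entirely the re-verification you flag as the ``main obstacle'', because with $\epsilon$ fixed none of the error terms acquire $\phi$-dependent factors. Your approach is more direct (one limit rather than two) but forces you to track how the $O(1/\sqrt n)$, $O(n^{-c})$, and $O(\log n/n)$ errors degrade when $\sin\phi$ is allowed to be as small as $n^{-\alpha}$; in particular the relative error in $(z-z^{-1})^2\approx-4\sin^2\phi$ becomes $O(n^{\alpha-1/2})$ rather than $O(n^{-1/2})$, and several downstream bounds shift accordingly. Your threshold $\alpha<1/12$ is exactly what is needed to keep $r_j^n=O(n^{-c_\ast})$ with $c_\ast=2/3-2\alpha>1/2$, and one can check that under this constraint all the modified errors remain $o(n^{-2/3})$, so that both~\eqref{eq:lam_j_asympt} and the condition $|k_j|<1$ survive uniformly. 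Both arguments are valid; the paper's is shorter, yours perhaps more quantitative.
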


\begin{proof}
The first part of the proof is a moderate modification of the approach applied in the proof of Proposition~\ref{prop:h-k_conj}. The essential difference is that one has to take into account the eigenvalues of $T_{n}$ occurring in the neighborhoods of the endpoints $\pm2$ of the essential spectrum $[-2,2]$. These eigenvalues were excluded from the previous analysis by restricting the range of $\phi$ in~\eqref{eq:phi_r_restrict}. At this point, we need to allow $\phi$ to approach $0$ arbitrarily close. Therefore we extend the range for the angle $\phi$ supposing
\begin{equation}
 \epsilon\pi\leq\phi\leq(1-\epsilon)\pi,
\label{eq:phi_restr_extended}
\end{equation}
for arbitrary but fixed $0<\epsilon<1/2$. Then $\sin\phi$ still remains bounded away from zero and the same computation as in the proof of Proposition~\ref{prop:h-k_conj} yields that, for $n$ large, there are asymptotically $n(1-2\epsilon)/2$ eigenvalues $\lambda_{j}$ of $T_{n}$ with the asymptotic behavior~\eqref{eq:lam_j_asympt}. The adapted range for indices~$j$ is given by inequalities 
\[
 \epsilon\pi\leq\frac{\pi(4j-1)}{2n}\leq(1-\epsilon)\pi.
\]
Then the argument $\phi_{j}$ satisfies~\eqref{eq:phi_restr_extended}, as $n\to\infty$, see~\eqref{eq:phi_j_asympt}. It means the range for~$j$ now reads
\begin{equation}
 \frac{2n\epsilon+1}{4}\leq j \leq \frac{2n(1-\epsilon)+1}{4}.
\label{eq:range_j_extended}
\end{equation}

The asymptotic formula~\eqref{eq:dist_asympt} remains true in the same form. Consequently, for all $n$ sufficiently large and $j$ satisfying~\eqref{eq:range_j_extended}, one has
\begin{equation}
 \dist(\lambda_{j},[-2,2])\geq\frac{1}{2}n^{-2/3}.
 \label{eq:dist_ineq}
\end{equation}
In addition, for $\sigma\geq1/2$, one gets
\begin{equation}
 \liminf_{n\to\infty}\frac{1}{n}\sum_{\lambda\in\sigma(T_{n})}\frac{1}{|\lambda^{2}-4|^{\sigma}}
 \geq\liminf_{n\to\infty}\frac{1}{n}\sum_{j\mbox{ as in }\eqref{eq:range_j_extended}}\frac{1}{|\lambda_{j}^{2}-4|^{\sigma}}=\frac{1}{2^{2\sigma+1}\pi}\int_{\epsilon\pi}^{(1-\epsilon)\pi}\frac{\dd x}{(1-\cos^{2} x)^{\sigma}},
 \label{eq:aux_ineq1}
\end{equation}
where~\eqref{eq:lam_j_asympt} has been used. Further, we estimate the integral
\begin{equation}
 \int_{\epsilon\pi}^{(1-\epsilon)\pi}\frac{\dd x}{(1-\cos^{2} x)^{\sigma}}=2\int_{\epsilon\pi}^{\pi/2}\frac{\dd x}{\sin^{2\sigma}x}\geq2\int_{\epsilon\pi}^{1}\frac{\dd x}{x^{2\sigma}}=
 \begin{cases}
 \frac{2}{2\sigma-1}\left((\pi\epsilon)^{1-2\sigma}-1\right), &\mbox{ if } \sigma>\frac{1}{2},\\[4pt]
 -2\log(\pi\epsilon), &\mbox{ if } \sigma=\frac{1}{2}.
 \end{cases}
 \label{eq:aux_ineq2}
\end{equation}
A combination of~\eqref{eq:aux_ineq1} and~\eqref{eq:aux_ineq2} implies that
\begin{equation}
 \liminf_{n\to\infty}\frac{1}{n}\sum_{\lambda\in\sigma(T_{n})}\frac{1}{|\lambda^{2}-4|^{\sigma}}\geq C_{\sigma}(\epsilon),
 \label{eq:r-sum_ineq}
\end{equation}
where
\[
 C_{\sigma}(\epsilon):=\frac{1}{2^{2\sigma}\pi}\times\begin{cases}
 \frac{1}{2\sigma-1}\left((\pi\epsilon)^{1-2\sigma}-1\right), &\mbox{ if } \sigma>\frac{1}{2},\\[4pt]
 -\log(\pi\epsilon), &\mbox{ if } \sigma=\frac{1}{2}.
 \end{cases}
\]
Clearly, for any $\sigma\geq1/2$,
\begin{equation}
 \lim_{\epsilon\to0+}C_{\sigma}(\epsilon)=\infty.
 \label{eq:C_eps_lim}
\end{equation}

Finally, one makes use of~\eqref{eq:dist_ineq} together with~\eqref{eq:r-sum_ineq} to obtain the lower bound
\[
 \liminf_{n\to\infty}n^{\frac{2p-3}{3}}\sum_{\lambda\in\sigma_{d}(T_{n})}\frac{\dist\left(\lambda,[-2,2]\right)^{p}}{|\lambda^{2}-4|^{\sigma}}\geq 2^{-p}\,C_{\sigma}(\epsilon).
\]
Bearing in mind~\eqref{eq:C_eps_lim} and taking the limit $\epsilon\to0+$ in the above inequality, one proves the statement.
\end{proof}

\section{Schr{\" o}dinger operators in dimension one}\label{sec:schrodinger}

The following theorem is a continuous analogue to Theorem~\ref{thm:second} and particularly
yields the negative answer to the open problem from~\cite{dem-han-kat_ieop13} for one-dimensional Schr{\" o}dinger operators $H=\frac{{\rm d}^2}{{\rm d}x^2}+V$ with complex-valued potentials $V\in L^{p}(\R)$ and $p\geq1$. 

\begin{thm}\label{thm:third}
For any $p\geq1$ and $\sigma\geq1/2$, one has
\[
\sup_{0\neq V\in L^{p}(\R)}\frac{1}{\|V\|_{L^{p}}^{p}}\sum_{\lambda\in\sigma_{d}(H)}\frac{\left(\dist\left(\lambda,[0,\infty)\right)\right)^{p}}{|\lambda|^{\sigma}}=\infty.
\]
\end{thm}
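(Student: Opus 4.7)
The plan is to mimic Section~\ref{sec:jacobi} in the continuum. For fixed $h>0$ and $L>0$, I would consider the rectangular barrier with complex coupling
\[
V_L(x) := \ii h\,\chi_{[0,L]}(x), \qquad x\in\R,
\]
satisfying $\|V_L\|_{L^p}^p = h^p L$, and the associated Schr\"odinger operator $H_L := -\tfrac{\dd^2}{\dd x^2} + V_L$, in the limit $L\to\infty$. The first step is to derive an explicit characteristic equation for the discrete spectrum: an $L^2$-eigenfunction at $\lambda\in\C\setminus[0,\infty)$ has the form $A e^{-\ii\mu x}$ for $x<0$, $B e^{\ii\nu x}+C e^{-\ii\nu x}$ for $x\in[0,L]$, and $D e^{\ii\mu x}$ for $x>L$, where $\mu=\sqrt{\lambda}$ with $\Im\mu>0$ and $\nu^2=\mu^2-\ii h$. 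Matching values and derivatives at $x=0,L$ yields
\[
 e^{\ii\nu L} = \pm\,\frac{\nu+\mu}{\nu-\mu},
\]
the continuous analogue of~\eqref{eq:beta_z_minus}--\eqref{eq:beta_z_plus}.

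Next I would carry out the asymptotic analysis of this equation. The numerical range argument of Lemma~\ref{lem:rough_local_evls} transfers verbatim and gives $\Re\lambda\ge 0$ and $0<\Im\lambda\le h$ for every $\lambda\in\sigma_d(H_L)$. In the perturbative regime $|\mu|^2\gg h$ one has $\nu\approx\mu-\ii h/(2\mu)$ and $(\nu+\mu)/(\nu-\mu)\approx 4\ii\mu^2/h$; matching arguments and moduli in the characteristic equation produces an asymptotic family of roots
\[
 \mu_j \,\approx\, \frac{(2j-1)\pi}{2L} + \ii\,\frac{hL}{(2j-1)\pi}, \qquad \lambda_j=\mu_j^2 \,\approx\, \Bigl(\frac{(2j-1)\pi}{2L}\Bigr)^{\!2} + \ii h,
\]
indexed by positive integers $j$ lying in a suitable window, and satisfying $\dist(\lambda_j,[0,\infty))\approx h$ uniformly.

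The bound is then obtained in exactly the manner of the proof of Theorem~\ref{thm:second}. Fix $0<\epsilon<M$ and restrict to the $\sim(M-\epsilon)L/\pi$ indices with $\Re\mu_j\in[\epsilon,M]$. Since $|\lambda_j|\lesssim(\Re\mu_j)^2$, a Riemann-sum argument gives
\[
 \sum_{\lambda\in\sigma_d(H_L)}\!\!\frac{\dist(\lambda,[0,\infty))^p}{|\lambda|^\sigma} \,\gtrsim\, \frac{h^p L}{\pi}\int_\epsilon^M\frac{\dd x}{x^{2\sigma}}.
\]
Dividing by $\|V_L\|_{L^p}^p = h^p L$, sending $L\to\infty$ first (with $h=h_L\to 0$ slowly enough to keep $\epsilon^2\gg h_L$) and then $\epsilon\to 0^+$, the ratio is bounded below by $\pi^{-1}\int_\epsilon^M \dd x/x^{2\sigma}$, which diverges precisely when $\sigma\ge 1/2$, exactly as in the integral estimate concluding the proof of Theorem~\ref{thm:second}.

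The hard part will be the asymptotic analysis of the characteristic equation: the expansions $\nu\approx\mu-\ii h/(2\mu)$ and the localisation $\mu_j\approx(2j-1)\pi/(2L)$ must be justified with error terms uniform in $j$ across the window $\Re\mu_j\in[\epsilon,M]$, and one must check for each asymptotic root that it actually produces an eigenvalue—i.e., that the branch of $\nu$ forced by the perturbative construction is consistent with $L^2$-integrability—which is the continuous counterpart of the $|k|<1$ check in Proposition~\ref{prop:spec_T_char}. A secondary technical point is the joint coupling of the three limits $L\to\infty$, $h\to 0$, and $\epsilon\to 0^+$, which must be arranged (for instance $h_L=L^{-1}$ and $\epsilon_L=L^{-1/4}$) so that the perturbative regime $|\mu|^2\gg h$ remains valid throughout the restricted range of~$j$.
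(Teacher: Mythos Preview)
Your plan is essentially the paper's own argument in different coordinates: the paper uses the scaling $h^{2}\tilde H_{h}=U_{h}^{-1}H_{h}U_{h}$ to reduce everything to the one-parameter family $H_{h}=-\dd^{2}/\dd x^{2}+\ii h\,\chi_{[-1,1]}$ and sends $h\to\infty$, whereas you keep both $(h,L)$ and take a coupled limit; after rescaling by $L$ these are the same construction, and your characteristic equation $e^{\ii\nu L}=\pm(\nu+\mu)/(\nu-\mu)$ is (on the $+$ branch) equivalent to the paper's $\mu^{2}+\ii h\cos^{2}\mu=0$ for even eigenfunctions. The paper's reduction to a single parameter is what buys it a cleaner endgame.

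One concrete caution on the coupled limit, since it is exactly the ``hard part'' you flag. Your leading term $\Im\mu_{j}\approx h/(2\Re\mu_{j})$ drops the next correction $-L^{-1}\log\!\bigl(4(\Re\mu_{j})^{2}/h\bigr)$, and it is this correction that controls the eigenvalue check $\Im\mu_{j}>0$. With your sample choice $h_{L}=L^{-1}$ and $M$ fixed, the correction dominates near $\Re\mu_{j}=M$ (since $h_{L}L=1$ is bounded while $\log(4M^{2}/h_{L})\to\infty$) and forces $\Im\mu_{j}<0$, so those roots are not eigenvalues and the Riemann sum over $[\epsilon,M]$ is unjustified. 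The fix is immediate---take $h_{L}=L^{-\alpha}$ with $\alpha<1$ so that $h_{L}L\to\infty$, or let the upper window endpoint $M=M_{L}\to 0$ as well---and the paper's single-parameter scaling (which amounts to $h_{L}\sim L^{-1}$ with \emph{both} window endpoints tending to~$0$) sidesteps the issue entirely.
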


The proof of Theorem~\ref{thm:third} follows from the following asymptotic analysis of discrete eigenvalues of the Schr{\" o}dinger operator with rectangular barrier potential and complex coupling constant.

\subsection{One-dimensional Schr{\" o}dinger operator with rectangular barrier potential and complex coupling}

Our strategy proceeds similarly as in the discrete settings. However, a scaling of the variable allows us to restrict the analysis to an even simpler family of Schr{\" o}dinger operators with a rectangular potential of a fixed support. Concretely, we study the one-parameter family of Schr{\" o}dinger operators $H_{h}:=H_{0}+V_{h}$ acting on $L^{2}(\R)$ with the potential
\begin{equation}
V_{h}(x):=\ii h\chi_{[-1,1]}(x), \quad x\in\R,
\label{eq:def_V_h}
\end{equation}
where $h>0$ and $\chi_{[-1,1]}$ is the indicator function of the interval $[-1,1]$. More concretely, the asymptotic behavior of the discrete eigenvalues of $H_{h}$, for $h\to\infty$, located in a subset of the complex plane is of our primary interest and, in the end,  yields a proof for Theorem~\ref{thm:third}.

The general analysis of the discrete eigenvalues of $H_{h}$ proceeds in a standard fashion by solving the eigenvalue equation $H_{h}\psi=\lambda\psi$ separately on $(-1,1)$ and $\R\setminus[-1,1]$ and choosing $\psi$, as well as its derivative, to be continuous at $\pm1$ . As a result, one finds that $\lambda=k^{2}$ is an eigenvalue of $H_{h}$, if there exist $\mu\in\C\setminus(2\Z+1)\frac{\pi}{2}$ satisfying the equations
\begin{equation}
 k^{2}=\mu^{2}+\ii h \quad\mbox{ and }\quad k=\ii\mu\tan\mu
\label{eq:k_mu_rel}
\end{equation}
together with the restriction
\begin{equation}
\Re\left(\mu\tan\mu\right)>0.
\label{eq:mu_cond}
\end{equation}
The last inequality means nothing but $\Im k>0$. Then the eigenvector of $H_{h}$ corresponding to the eigenvalue $\lambda=k^{2}$ can be chosen as
\[
\psi(x)=\begin{cases}
\cos(\mu)e^{\ii k|x|}, & \mbox{ if } |x|>1,\\
e^{\ii k}\cos(\mu x), & \mbox{ if } |x|\leq1.
\end{cases}
\]
The equations in~\eqref{eq:k_mu_rel} provide us with the characteristic equation
\begin{equation}
\mu^{2}+\ii h \cos^{2}\mu=0,
\label{eq:char_eq_mu}
\end{equation}
whose solutions $\mu$ are restricted by~\eqref{eq:mu_cond}. 

Finally, one can show, similarly as in Lemma~\ref{lem:rough_local_evls}, that the discrete spectrum of $H_{h}$ has to be located in the strip $[0,\infty)+\ii(0,h]$, for $h>0$. A numerical illustration of the discrete spectrum of $H_{h}$, for $h=2500$, is shown in Figure~\ref{fig:evls-cont}.

\begin{figure}[htb!]
    \centering
        \includegraphics[width=0.9\textwidth]{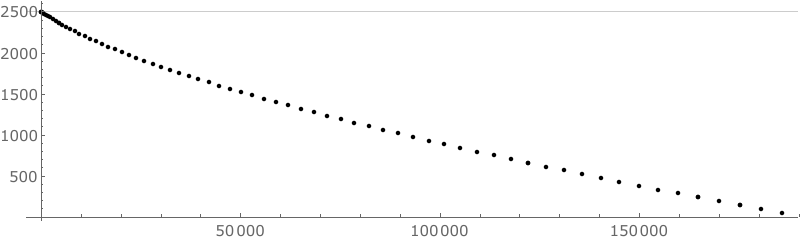}
       \caption{The points represent eigenvalues $\lambda=\mu^{2}+\ii h$ of $H_{h}$, for $h=2500$, where $\mu$ are numerically found solutions of the equation~\eqref{eq:char_eq_mu} satisfying condition~\eqref{eq:mu_cond}.}
   \label{fig:evls-cont}
\end{figure}

\subsection{On the problem of Demuth, Hansmann, and Katriel}

Analogously to the discrete case, we first consider the Schr{\" o}dinger operator $\tilde{H}_{h}$ on $L^{2}(\R)$ defined by
\[
\tilde{H}_{h}:=-\frac{\dd^{2}}{\dd x^{2}}+\tilde{V}_{h},
\]
where
\[
 \tilde{V}_{h}(x):=\frac{\ii}{h}\chi_{[-h,h]}(x)
\]
and $h>0$. The operator $U_{h}$ defined on $L^{2}(\R)$ by 
\[
U_{h}f(x):=h^{1/2}f(hx), \quad x\in\R,
\]
is an isomorphism on $L^{2}(\R)$. Moreover, one has
\[
h^{2}\tilde{H}_{h}=U_{h}^{-1}H_{h}U_{h},
\]
where $H_{h}$ is the Schr{\" o}dinger operator with potential~\eqref{eq:def_V_h}. 
It follows that
\[
\lambda\in\sigma(H_{h}) \quad\Leftrightarrow\quad \frac{\lambda}{h^{2}}\in\sigma(\tilde{H}_{h}).
\]
Noticing also that
\[
\|\tilde{V}_{h}\|_{L^{p}}^{p}=2h^{1-p}
\]
and
\[
 \dist(\lambda,[0,\infty))=\Im\lambda, \; \mbox{ for } \lambda\in\sigma_{d}(\tilde{H}_{h}),
\]
one obtains
\begin{equation}
\frac{1}{\|\tilde{V}_{h}\|_{L^{p}}^{p}}\sum_{\lambda\in\sigma_{d}(\tilde{H}_{h})}\frac{\left(\dist(\lambda,[0,\infty))\right)^{p}}{|\lambda|^{\sigma}}=\frac{1}{2}h^{2\sigma-p-1}\sum_{\lambda\in\sigma_{d}(H_{h})}\frac{(\Im\lambda)^{p}}{|\lambda|^{\sigma}}.
\label{eq:lieb-thirring-expr-tilde-vs-non-tilde}
\end{equation}
Now equality~\eqref{eq:lieb-thirring-expr-tilde-vs-non-tilde} together with the following statement implies Theorem~\ref{thm:third}.

\begin{prop}
For any $p\geq1$ and $\sigma\geq1/2$, it holds that
\[
\lim_{h\to\infty}h^{2\sigma-p-1}\sum_{\lambda\in\sigma_{d}(H_{h})}\frac{(\Im\lambda)^{p}}{|\lambda|^{\sigma}}=\infty.
\]
\end{prop}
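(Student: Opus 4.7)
My plan is to follow the blueprint of the Jacobi analogue (the proposition preceding Theorem~\ref{thm:second}): locate an explicit one-parameter family of eigenvalues of $H_h$, and show that they force the sum to diverge at the required rate through a Riemann-sum argument.

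From the characterization $\lambda=\mu^2+ih$ with $\mu^2+ih\cos^2\mu=0$, I rewrite the equation as $\mu=\pm\alpha\cos\mu$ where $\alpha:=\sqrt{h}\,e^{-i\pi/4}$, and look for solutions of the form $\mu_m=\rho_m e^{i\theta_m}$ with $\theta_m<0$ small and $\rho_m\gg\sqrt{h}$. In this regime the asymptotic $\cos\mu\approx\tfrac{1}{2}e^{i\mu}$ is accurate, with relative error of order $e^{-2|\Im\mu|}=(\sqrt{h}/(2\rho_m))^2$, and equating moduli and arguments decouples the equation into
\[
\log(2\rho_m/\sqrt{h})=\rho_m|\sin\theta_m|,\qquad \rho_m\cos\theta_m\equiv \tfrac{\pi}{4}\pmod{\pi}
\]
(the two sign choices for $\alpha$ combine to give a joint spacing of $\pi$). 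Hence $\rho_m\approx(4m+1)\pi/4$ and $\theta_m\approx -\log(2\rho_m/\sqrt{h})/\rho_m$. A contraction (or Rouch\'e) argument then upgrades this to genuine solutions $\mu_m$ for every integer $m$ with $C\sqrt{h}\leq\rho_m\leq\rho_{\max}$, where $C>1/2$ is a fixed (large) constant ensuring small approximation error, and $\rho_{\max}\sim h/\log h$ is chosen so that $\Im\lambda_m\geq h/2$ throughout the range.

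Next, I estimate the sum. Since $\Re\lambda_m\approx\rho_m^2$ and $\Im\lambda_m=h-2\rho_m\log(2\rho_m/\sqrt{h})\in[h/2,h]$, one has $|\lambda_m|^2\leq\rho_m^4+h^2$. The change of variable $v_m:=\rho_m/\sqrt{h}$ converts the target sum into a Riemann approximation of
\[
\frac{h^{p+\frac{1}{2}-\sigma}}{\pi\,2^p}\int_{C}^{v_{\max}}\frac{dv}{(v^4+1)^{\sigma/2}},
\]
with $v_{\max}=\rho_{\max}/\sqrt{h}\sim\sqrt{h}/\log h\to\infty$. Multiplying by $h^{2\sigma-p-1}$ reduces the claim to showing $h^{\sigma-1/2}\int_C^{v_{\max}}dv/(v^4+1)^{\sigma/2}\to\infty$. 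For $\sigma>1/2$ the integrand decays like $v^{-2\sigma}$ at infinity, so the integral tends to a strictly positive constant and the prefactor $h^{\sigma-1/2}$ already diverges; in the critical case $\sigma=1/2$ the integrand decays only as $1/v$, so the integral grows like $\log(v_{\max}/C)\sim\tfrac{1}{2}\log h$, yielding the required divergence.

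I anticipate the main obstacle to be the rigorous existence step for the family across the full range $[C\sqrt{h},\rho_{\max}]$. One must verify the asymptotic formulas for $\mu_m$ with errors that are uniformly small in $m$, so that the induced spacing in $v$ (needed for the Riemann-sum approximation) is $\pi/\sqrt{h}(1+o(1))$, and one must also be careful that no further eigenvalues artificially inflate or perturb the count. The fixed-point argument is standard in spirit but the region is large (several orders of magnitude in $\rho$), so book-keeping the error terms uniformly will be the technical heart, analogous to the passage from~\eqref{eq:asymt_eq_solut} to~\eqref{eq:lam_j_asympt} in the discrete proof.
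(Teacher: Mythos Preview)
Your approach is essentially the paper's: both factor the characteristic equation as $\mu=\pm\sqrt{h}\,e^{-i\pi/4}\cos\mu$, replace $\cos\mu$ by $\tfrac12 e^{\pm i\mu}$ in a region where $|\Im\mu|$ grows like $\log h$, extract a family $\{\mu_m\}$ with $\Re\mu_m$ spaced by $\pi$ (the paper keeps only one sign, spacing $2\pi$), and finish by an integral comparison for $\sum j^{-2\sigma}$. The paper parametrizes its window by $\alpha\log h\le\Im\mu\le\beta\log h$ with $\beta<\tfrac12$ fixed, whereas you use $C\sqrt{h}\le|\mu|\le h/\log h$; these describe the same solutions and give the same divergence rates ($\log h$ for $\sigma=\tfrac12$, a power of $h$ for $\sigma>\tfrac12$).

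There is one genuine omission. Solutions of~\eqref{eq:char_eq_mu} correspond to eigenvalues only if the side condition~\eqref{eq:mu_cond} holds, i.e.\ $(\Re\mu)\sin(2\Re\mu)>(\Im\mu)\sinh(2\Im\mu)$; the paper checks this explicitly and it is the reason for the restriction $\beta<\tfrac12$. You never mention it. In your parametrization it does follow from your cutoff $\Im\lambda_m\ge h/2$: since $\Re\mu_m\equiv\pi/4\pmod\pi$ one has $\sin(2\Re\mu_m)\approx1$, while $|\Im\mu_m|\sinh(2|\Im\mu_m|)\le 2\rho_m^2h^{-1}\log(2\rho_m/\sqrt{h})$, so the condition reduces to $2\rho_m\log(2\rho_m/\sqrt{h})<h$, which is exactly $\Im\lambda_m>0$ and is implied by $\Im\lambda_m\ge h/2$. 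You should insert this verification; without it the $\mu_m$ are only roots of the transcendental equation, not eigenvalues.
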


\begin{proof}
 First note that the function on the left-hand side of~\eqref{eq:char_eq_mu} is even in $\mu$. Moreover,  equation \eqref{eq:char_eq_mu} does not possess any purely imaginary solutions. Thus, we can restrict the analysis of solutions  of~\eqref{eq:char_eq_mu} to the half-plane given by $\Re\mu<0$.
 
 In the proof, we are interested in those solutions $\mu$ of~\eqref{eq:char_eq_mu} that are located in the set determined by the inequalities
 \begin{equation}
 \alpha\log h\leq\Im\mu\leq\beta\log h \quad \mbox{ and } \quad \Re\mu\leq-h^{\gamma}|\Im\mu|,
 \label{eq:mu_ineq}
 \end{equation}
where $\alpha,\beta,\gamma>0$ are ($h$-independent) constants such that $\gamma<2\alpha<2\beta<1$. Such a set does indeed contain solutions of~\eqref{eq:char_eq_mu}, if $h$ is large enough. In fact, we will show that the number of solutions is increasing as $h\to\infty$ and their asymptotic behavior implies the claim.

It follows from~\eqref{eq:mu_ineq} that 
 \[
 \cos\mu=\frac{e^{-\ii\mu}}{2}\left(1+O\left(h^{-2\alpha}\right)\right) \quad\mbox{ and }\quad \mu=\left(\Re\mu\right)\left(1+O\left(h^{-\gamma}\right)\right),
 \]
 as $h\to\infty$. Note that both error terms in the above asymptotic formulas are independent of $\mu$ and hence the asymptotic expansions are uniform in~$\mu$ from~\eqref{eq:mu_ineq}. Further, since $\gamma<2\alpha$, one has
 \begin{equation}
 \frac{\mu}{\cos{\mu}}=2\left(\Re\mu\right)e^{\ii\mu}\left(1+O\left(h^{-\gamma}\right)\right), \quad h\to\infty.
 \label{eq:mu_cosmu_aux}
 \end{equation}
 
 Rather than~\eqref{eq:char_eq_mu}, we actually focus on solutions of the equation
 \begin{equation}
  \mu+e^{-\ii\pi/4}\sqrt{h}\cos\mu=0,
 \label{eq:char_eq_root}
 \end{equation}
 which are clearly also solutions of~\eqref{eq:char_eq_mu}. By combining \eqref{eq:mu_cosmu_aux} and \eqref{eq:char_eq_root}, one arrives at the equation
 \begin{equation}
 2\left(\Re\mu\right)e^{\ii\mu}=-\sqrt{h}e^{7\ii\pi/4}\left(1+O\left(h^{-\gamma}\right)\right),
 \label{eq:char_eq_root_leading_terms}
 \end{equation}
 for $h\to\infty$. 
 
 Next, we will need the following general observation: for given $r>0$ and $\varphi\in[0,2\pi)$, all solutions $\mu\in\C$, with $\Re\mu<0$, of the equation
 \[
 2\left(\Re\mu\right)e^{\ii\mu}=-re^{\ii\varphi}
 \]
 are
 \[
 \mu_{j}=\varphi-2j\pi+\ii\log\left(\frac{4j\pi-2\varphi}{r}\right), \quad j\in\N.
 \]
  Applying this observation to~\eqref{eq:char_eq_root_leading_terms} with
 \[ 
 r=\sqrt{h}\left(1+O\left(h^{-\gamma}\right)\right) \quad \mbox{ and } \quad  \varphi=\frac{7\pi}{4}\left(1+O\left(h^{-\gamma}\right)\right),
 \]
 one gets asymptotic formulas for solutions of~\eqref{eq:char_eq_root} in the form
 \begin{equation}
 \mu_{j}=\frac{\pi}{4}(7-8j)+\ii\log\left(\frac{\pi(8j-7)}{2\sqrt{h}}\right)+O\left(h^{-\gamma}\right), \quad h\to\infty,
 \label{eq:mu_j_asympt}
 \end{equation}
 provided that the indices $j\in\N$ are taken such that the $\mu_{j}$ satisfy the restrictions from~\eqref{eq:mu_ineq}. The first restriction from~\eqref{eq:mu_ineq} imposes $h^{\alpha}\leq\exp(\Im\mu_{j})\leq h^{\beta}$, which means
 \[
 \frac{h^{\alpha+1/2}}{4\pi}\left(1+O\left(h^{-\gamma}\right)\right)+\frac{7}{8}\leq j \leq  \frac{h^{\beta+1/2}}{4\pi}\left(1+O\left(h^{-\gamma}\right)\right)+\frac{7}{8}.
 \]
 Using that the Landau symbols above do not depend on $j$, we can restrict the range for $j$ even more. In fact, due to the freedom of choice of constants $\alpha$ and $\beta$ satisfying $\gamma<2\alpha<2\beta<1$, we can simply suppose
 \begin{equation}
	h^{\alpha+1/2}\leq j\leq h^{\beta+1/2},
	\label{eq:range_j_h}
 \end{equation}
 for $h$ sufficiently large, without loss of generality. Concerning the second restriction from~\eqref{eq:mu_ineq}, it is straightforward to check that it is automatically satisfied for $\mu=\mu_{j}$, if $h$ is sufficiently large.
 
 Further, we show that the found solutions $\mu_{j}$, with $j$ as in~\eqref{eq:range_j_h}, give rise to eigenvalues of $H_{h}$ for $h$ large enough. To do so, one has to verify condition~\eqref{eq:mu_cond}, which is equivalent to 
 \[
(\Im\mu)\sinh(2\Im\mu)<(\Re\mu)\sin(2\Re\mu),
 \]
  for $\mu=\mu_{j}$. To this end, we use the asymptotic expansions
 \[
  \sin(2\Re\mu_{j})=-1+O\left(h^{-\gamma}\right), \quad \Re\mu_{j}=-\frac{1}{2}\sqrt{h}e^{\Im\mu_{j}}\left(1+O\left(h^{-\gamma}\right)\right), \quad  h\to\infty,
 \]
 and the inequalities
 \[
  \sinh(2\Im\mu_{j})<\frac{1}{2}e^{2\Im\mu_{j}}, \quad \Im\mu_{j}\leq \beta\log h,
 \]
  which are consequences of~\eqref{eq:mu_j_asympt} and~\eqref{eq:mu_ineq}. Then, since $\beta<1/2$, we have
  \[
   (\Im\mu_{j})\sinh(2\Im\mu_{j})<\frac{\beta}{2}\log(h)h^{\beta}e^{\Im\mu_{j}}<\frac{\beta}{2}\sqrt{h}e^{\Im\mu_{j}}<(\Re\mu_{j})\sin(2\Re\mu_{j}),
  \]
  provided $h$ to be sufficiently large.
  
  According to~\eqref{eq:k_mu_rel}, the eigenvalue $\lambda_{j}$ corresponding to the solution~$\mu_{j}$ is given by  $\lambda_{j}=\ii h +\mu_{j}^{2}$. It follows from~\eqref{eq:mu_j_asympt} and~\eqref{eq:range_j_h} that
  \[
  \Im\lambda_{j}=h+\Im(\mu_{j}^{2})=h+2\Re\mu_{j}\Im\mu_{j}=h+O\left(h^{\beta+1/2}\log h\right)=h\left(1+O\left(h^{\beta-1/2}\log h\right)\right)\!,
  \]
  as $h\to\infty$. In particular, we may conclude that there exists $h_{0}>0$ such that, for $h>h_{0}$ and $j$ satisfying~\eqref{eq:range_j_h}, we have the estimate 
  \begin{equation}
   \Im\lambda_{j}>\frac{h}{2}.
   \label{eq:im_lam_bound}
  \end{equation}
  Similarly, one computes that
  \begin{equation}
  |\lambda_{j}|=|\mu_{j}|^{2}\left|1+\frac{\ii h}{\mu_{j}^{2}}\right|=\left(\Re\mu_{j}\right)^{2}\left(1+O\left(h^{-2\gamma}\right)\right), \quad h\to\infty,
  \label{eq:abs_lam_j_aux}
  \end{equation}
  where we have used the assumption $\gamma<2\alpha$ together with the asymptotic formulas
  \[
  \frac{1}{\mu_{j}}=O\left(h^{-\alpha-1/2}\right) \quad \mbox{ and } \quad |\mu_{j}|^{2}=\left(\Re\mu_{j}\right)^{2}\left(1+O\left(h^{-2\gamma}\right)\right), \quad h\to\infty.
  \]
  It follows again from~\eqref{eq:mu_j_asympt} and~\eqref{eq:range_j_h} that $0<-\Re\mu_{j}\leq2\pi  j$, for all $h$ sufficiently large. Using~\eqref{eq:abs_lam_j_aux}, we may claim without loss of generality that, for $h>h_{0}$ and $j$ within the range~\eqref{eq:range_j_h}, we have the estimate
  \begin{equation}
  |\lambda_{j}|^{1/2}\leq2\pi j.
  \label{eq:abs_lam_bound}
  \end{equation}
  
  In total, for $\sigma\geq1/2$, estimates~\eqref{eq:im_lam_bound} and~\eqref{eq:abs_lam_bound} yield the lower bound
  \[
  \sum_{\lambda\in\sigma_{d}(H_{h})}\frac{\left(\Im\lambda\right)^{p}}{|\lambda|^{\sigma}}\geq
  \frac{h^{p}}{2^{2\sigma+p}\pi^{2\sigma}}\sum_{h^{\alpha+1/2}\leq j\leq h^{\beta+1/2}}\frac{1}{j^{2\sigma}},
  \]
  for all $h>h_{0}$. If we further apply the estimate
  \[
   \sum_{u\leq j \leq v}\frac{1}{j^{2\sigma}}\geq \int_{u}^{v}\frac{\dd x}{x^{2\sigma}}-\frac{1}{(u-1)^{2\sigma}}-\frac{1}{v^{2\sigma}},
  \]
  which holds true for any $1<u<v$, we obtain
  \[
  \sum_{\lambda\in\sigma_{d}(H_{h})}\frac{\left(\Im\lambda\right)^{p}}{|\lambda|^{\sigma}}\geq
  \begin{cases}
  	\frac{h^{p}}{2^{p+1}\pi}\left[(\beta-\alpha)\log h +O\left(h^{-\alpha-1/2}\right)\right],& \quad \mbox{ if } \sigma=1/2,\\[4 pt]
  	\frac{h^{p}}{2^{2\sigma+p}\pi^{2\sigma}(2\sigma-1)}\left[h^{(1-2\sigma)(\alpha+1/2)} +O\left(h^{-\delta}\right)\right],& \quad \mbox{ if } \sigma>1/2,\\
  	\end{cases}
  \]
  for $h\to\infty$, where $\delta:=\min\{2\sigma(\alpha+1/2),(2\sigma-1)(\beta+1/2)\}$.
  Finally, we arrive at the lower estimate
  \[
  h^{2\sigma-p-1}\sum_{\lambda\in\sigma_{d}(H_{h})}\frac{\left(\Im\lambda\right)^{p}}{|\lambda|^{\sigma}}\geq
  \begin{cases}
  	\frac{\beta-\alpha}{2^{p+1}\pi}\log(h) \left(1 +o(1)\right),& \quad \mbox{ if } \sigma=1/2,\\[4 pt]	
  	\frac{1}{2^{2\sigma+p}\pi^{2\sigma}(2\sigma-1)}h^{(2\sigma-1)(1/2-\alpha)}\left(1 +o(1)\right),& \quad \mbox{ if } \sigma>1/2,\\
  	\end{cases}
  \]
  as $h\to\infty$, which readily implies the statement.
\end{proof}

\subsection{A comment on the multidimensional case}
\label{subsec:higherdim}

The open problem from~\cite{dem-han-kat_ieop13} concerns arbitrary dimensions~$d\in\N$. At this point, when the solution is found for $d=1$, one can naturally ask whether the approach used in the one-dimensional case could be generalized to find counter-examples in the multidimensional case as well. Clearly, there are many candidates that could be thought of as multidimensional analogues of the Schr{\" o}dinger operators~$H_{h}$ analyzed in this Section~\ref{sec:schrodinger}.
Except the requirement that the multidimensional candidate should coincide with~$H_{h}$ for $d=1$, one should also seek operators whose spectral problem can be reduced to a problem of finding zeros of some well known functions.

One of possible candidates is given by the family of Schr{\" o}dinger operators
\[
H_{h}:=-\Delta+\ii h\chi_{B_{1}(0)}, \quad h>0,
\]
where $\chi_{B_1(0)}$ is the indicator function of the $d$-dimensional unit ball~$B_{1}(0)$ centered at the origin. Since the potential is spherically symmetric, it is natural to use spherical coordinates in the spectral analysis of~$H_{h}$. Then the eigenvalue equation for the radial part of the transformed operator~$H_{h}$ reduces to the Bessel differential equation. The requirement that the eigenfunctions have to be continuously differentiable at the unit sphere provides us with a~characteristic equation expressed in terms of Bessel and Hankel functions of the first kind. However, the necessary asymptotic analysis of the eigenvalues seems to be much more involved than in the particular case of $d=1$.

At this moment, we do not known whether $H_{h}$ can serve as a counter-example for the open problem of Demuth, Hansmann, and Katriel when $d\geq2$. This question should be the subject of future research.

\section*{Acknowledgement}
The research of F.~{\v S}. was supported by the GA{\v C}R grant No. 20-17749X.

\bibliographystyle{acm}

\begin{thebibliography}{99}

\bibitem{bog_cmp17}
{\sc B\"{o}gli, S.}
\newblock Schr\"{o}dinger operator with non-zero accumulation points of complex
  eigenvalues.
\newblock {\em Comm. Math. Phys. 352}, 2 (2017), 629--639.

\bibitem{bor-gol-kup_blms09}
{\sc Borichev, A., Golinskii, L., and Kupin, S.}
\newblock A {B}laschke-type condition and its application to complex {J}acobi
  matrices.
\newblock {\em Bull. Lond. Math. Soc. 41}, 1 (2009), 117--123.

\bibitem{chri-zin_lmp17}
{\sc Christiansen, J.~S., and Zinchenko, M.}
\newblock Lieb-{T}hirring inequalities for complex finite gap {J}acobi
  matrices.
\newblock {\em Lett. Math. Phys. 107}, 9 (2017), 1769--1780.

\bibitem{dem-han-kat_jfa09}
{\sc Demuth, M., Hansmann, M., and Katriel, G.}
\newblock On the discrete spectrum of non-selfadjoint operators.
\newblock {\em J. Funct. Anal. 257}, 9 (2009), 2742--2759.

\bibitem{dem-han-kat_ieop13}
{\sc Demuth, M., Hansmann, M., and Katriel, G.}
\newblock Lieb-{T}hirring type inequalities for {S}chr\"{o}dinger operators
  with a complex-valued potential.
\newblock {\em Integral Equations Operator Theory 75}, 1 (2013), 1--5.

\bibitem{fik_laa18}
{\sc Fikioris, G.}
\newblock Spectral properties of {K}ac-{M}urdock-{S}zeg\"{o} matrices with a
  complex parameter.
\newblock {\em Linear Algebra Appl. 553\/} (2018), 182--210.

\bibitem{fra_tams18}
{\sc Frank, R.~L.}
\newblock Eigenvalue bounds for {S}chr\"{o}dinger operators with complex
  potentials. {III}.
\newblock {\em Trans. Amer. Math. Soc. 370}, 1 (2018), 219--240.

\bibitem{fra-etal_06}
{\sc Frank, R.~L., Laptev, A., Lieb, E.~H., and Seiringer, R.}
\newblock Lieb-{T}hirring inequalities for {S}chr\"{o}dinger operators with
  complex-valued potentials.
\newblock {\em Lett. Math. Phys. 77}, 3 (2006), 309--316.

\bibitem{fra-sim-wei_cmp08}
{\sc Frank, R.~L., Simon, B., and Weidl, T.}
\newblock Eigenvalue bounds for perturbations of {S}chr\"{o}dinger operators
  and {J}acobi matrices with regular ground states.
\newblock {\em Comm. Math. Phys. 282}, 1 (2008), 199--208.

\bibitem{gol-kup_lmp07}
{\sc Golinskii, L., and Kupin, S.}
\newblock Lieb-{T}hirring bounds for complex {J}acobi matrices.
\newblock {\em Lett. Math. Phys. 82}, 1 (2007), 79--90.

\bibitem{han_lmp11}
{\sc Hansmann, M.}
\newblock An eigenvalue estimate and its application to non-selfadjoint
  {J}acobi and {S}chr\"{o}dinger operators.
\newblock {\em Lett. Math. Phys. 98}, 1 (2011), 79--95.

\bibitem{han-kat_caot11}
{\sc Hansmann, M., and Katriel, G.}
\newblock Inequalities for the eigenvalues of non-selfadjoint {J}acobi
  operators.
\newblock {\em Complex Anal. Oper. Theory 5}, 1 (2011), 197--218.

\bibitem{hun-lie-tho_atmp98}
{\sc Hundertmark, D., Lieb, E.~H., and Thomas, L.~E.}
\newblock A sharp bound for an eigenvalue moment of the one-dimensional
  {S}chr\"{o}dinger operator.
\newblock {\em Adv. Theor. Math. Phys. 2}, 4 (1998), 719--731.

\bibitem{hun-sim_jat02}
{\sc Hundertmark, D., and Simon, B.}
\newblock Lieb-{T}hirring inequalities for {J}acobi matrices.
\newblock {\em J. Approx. Theory 118}, 1 (2002), 106--130.

\bibitem{lie-thi_prl75}
{\sc Lieb, E.~H., and Thirring, W.~E.}
\newblock Bound for the kinetic energy of fermions which proves the stability
  of matter.
\newblock {\em Phys. Rev. Lett. 35\/} (1975), 687--689.

\bibitem{lie-thi_91}
{\sc Lieb, E.~H., and Thirring, W.~E.}
\newblock {\em Inequalities for the Moments of the Eigenvalues of the
  Schrodinger Hamiltonian and Their Relation to Sobolev Inequalities}.
\newblock Springer Berlin Heidelberg, Berlin, Heidelberg, 1991, pp.~135--169.

\bibitem{wei_cmp96}
{\sc Weidl, T.}
\newblock On the {L}ieb-{T}hirring constants {$L_{\gamma,1}$} for {$\gamma\geq
  1/2$}.
\newblock {\em Comm. Math. Phys. 178}, 1 (1996), 135--146.

\end{thebibliography}

\end{document}